\font\sr = cmr8
\newtheorem{theorem}{Theorem}
\theoremstyle{plain}
\newtheorem{example}{Example}
\newtheorem{lemma}{Lemma}
\numberwithin{equation}{section}
\begin{document}
\title{Cones and convex bodies with modular face lattices.}
\author{Daniel Labardini-Fragoso}
\author{Max Neumann-Coto}
\author{Martha Takane}
\address{Graduate Program in Mathematics, Northeastern University, Boston,
Ma.}
\address{Instituto de Matem\'{a}ticas, Universidad Nacional Aut\'{o}noma de M%
\'{e}xico, Cuernavaca, M\'{e}xico.}
\subjclass{52A20, 06C05, 51A05, 15A48}
\keywords{convex, face lattice, modular, hermitian matrix, projective space\\
Research partially supported by PAPIIT-UNAM grant IN103508 and a fellowship
from PASPA}
\dedicatory{Dedicated to Claus M. Ringel on the occasion of his 60th
birthday.}
\maketitle

\begin{abstract}
If a convex body $C$ has modular and irreducible face lattice (and is not
strictly convex), there is a face-preserving homeomorphism from $C$ to a
section of a cone of hermitian matrices over $\mathbb{R}$, $\mathbb{C}$, or $%
\mathbb{H}$, or $C$ has dimension 8, 14 or 26.
\end{abstract}

\section{Introduction.}

Let $C$ be a convex body in $\mathbb{R}^{n}$. A subset $F$ of $C$ is a 
\textit{face} of $C$ if every open interval in $C$ that contains a point of $%
F$ is contained in $F$. An \textit{extreme point} is a 1-point face. If $S$
is any subset of $C$, the\textit{\ face generated by} $S$ is the minimal
face of $C$ containing $S$. The set $\mathcal{F}(C)$ of all faces of $C$
ordered by inclusion is a lattice, where $F\wedge G$ is the intersection of $%
F$ and $G$, and $F\vee G$ is the face generated by $F\cup G$. The lattice $%
\mathcal{L}(C)$ is always \textit{algebraic }(the chains of faces are
finite), \textit{atomic} (faces are generated by extreme points) and \textit{%
complemented} (for every face $F$ there exists a face $G$ such that $F\wedge
G=\emptyset $ and $F\vee G=C$). We want to consider convex bodies for which $%
\mathcal{F}(C)$ is \textit{modular}, i.e. $F\vee (G\wedge H)=(F\vee G)\wedge
H$ whenever $F\leq H$. Modularity is a \ `weak distributivity' property
satisfied by the lattice of normal subgroups of a group and by the lattice
of subspaces of a vector space. For algebraic, atomic lattices, modularity
is equivalent to the existence of a rank function such that $%
rk(F)+rk(G)=rk(F\vee G)+rk(F\wedge G)$ for all $F$ and $G$ \cite{LT}.
Strictly convex bodies and simplices clearly have modular face lattices. No
other polytopes have this property \cite{B}, but there are beautiful
examples of non-polytopal convex bodies in which every pair of extreme
points is contained in a proper face and every pair of faces with more than
one point meet.

If $\mathcal{L}_{1}$ and $\mathcal{L}_{2}$ are lattices, their \textit{%
direct product} is given by $(\mathcal{L}_{1}\times \mathcal{L}_{2},\leq ),$
where $(a,b)\leq (c,d)$ if and only if $a\leq c$ and $b\leq d$. It follows
that the direct product of two lattices is modular if and only if the
factors are modular. A lattice is called \textit{irreducible} if it is not
isomorphic to a direct product of two nontrivial lattices.

If $C_{1}\subset \mathbb{R}^{m}$ and $C_{2}\subset \mathbb{R}^{n}$ are
convex bodies, define $C_{1}\ast C_{2}\subset \mathbb{R}^{m+n+1}$ as the
convex hull of a copy of $C_{1}$ and a copy of $C_{2}$ placed in general
position in the sense that their linear spans are disjoint and have no
common directions. So $C_{1}\ast C_{2}$ is well defined up to a linear
transformation: it is the convex join of $C_{1}$ and $C_{2}$ of largest
dimension. For example $C\ast \left\{ pt\right\} $ is a pyramid with base $C$%
. Let's say that a convex body $C$ is $\ast $-\textit{decomposable} if $%
C=C_{1}\ast C_{2}$ for two convex bodies $C_{1}$ and $C_{2}$.

The natural correspondence (up to linear transformation) between convex
bodies in $\mathbb{R}^{n}$ and closed cones in $\mathbb{R}^{n+1}$ gives an
isomorphism of face lattices in which $C_{1}\ast C_{2}$ corresponds to the
direct product of the cones, so the results of this paper apply to cones.
This project started with the undergraduate thesis of D. Labardini-Fragoso 
\cite{Lab}, who showed that in dimension less than 6 any cone with modular
face lattice is strictly convex or is decomposable (this was conjectured by
Barker in \cite{B}).

\bigskip

\begin{lemma}
A convex body $C$ is $\ast $-decomposable if and only if its lattice of
faces $\mathcal{L}(C)$ is reducible.
\end{lemma}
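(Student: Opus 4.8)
I would establish the two directions separately.

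\medskip

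\noindent\emph{$\ast$-decomposable $\Rightarrow$ reducible.} Suppose $C=C_1\ast C_2$, with copies of $C_1,C_2$ placed in general position. The first point is that $C_1$ and $C_2$ are themselves faces of $C$: since their affine hulls are in general position, there is a supporting hyperplane of $C$ meeting $C$ in exactly the copy of $C_1$ (take the affine functional that is $0$ on $\mathrm{aff}(C_1)$ and $1$ on $\mathrm{aff}(C_2)$), and similarly for $C_2$. Consequently the extreme points of $C$ are exactly those of the copy of $C_1$ together with those of the copy of $C_2$, and for any face $G$ of $C$ the sets $G\cap C_1$ and $G\cap C_2$ are faces of $C_1$ and $C_2$ whose extreme points are the extreme points of $G$ lying in $C_1$, resp. $C_2$. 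Using that $\mathcal L(C)$ is atomic, it follows that $G\mapsto(G\cap C_1,G\cap C_2)$ and $(F_1,F_2)\mapsto\mathrm{conv}(F_1\cup F_2)$ are mutually inverse order isomorphisms between $\mathcal L(C)$ and $\mathcal L(C_1)\times\mathcal L(C_2)$. Both $\mathcal L(C_1)$ and $\mathcal L(C_2)$ are nontrivial (a nonempty convex body has at least the distinct faces $\emptyset$ and itself), so $\mathcal L(C)$ is reducible.

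\medskip

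\noindent\emph{Reducible $\Rightarrow$ $\ast$-decomposable.} Suppose $\mathcal L(C)\cong\mathcal L_1\times\mathcal L_2$ with both factors nontrivial, and let $F_a,F_b$ be the faces of $C$ corresponding to $(1_1,0_2)$ and $(0_1,1_2)$. Since $(1_1,0_2)\wedge(0_1,1_2)$ is the least element, $F_a\cap F_b=\emptyset$; since every atom of a direct product lies below $(1_1,0_2)$ or below $(0_1,1_2)$, every extreme point of $C$ lies in $F_a\cup F_b$, so $C=\mathrm{conv}(F_a\cup F_b)$. Moreover $\mathcal F(F_a)=[\emptyset,F_a]\cong\mathcal L_1$ and $\mathcal F(F_b)\cong\mathcal L_2$, and more generally every face $H$ of $C$ satisfies $H=\mathrm{conv}\bigl((H\cap F_a)\cup(H\cap F_b)\bigr)$ with $\mathcal F(H)\cong\mathcal F(H\cap F_a)\times\mathcal F(H\cap F_b)$, the intersections being the meets $H\wedge F_a$ and $H\wedge F_b$. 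Let $F_a\ast F_b$ be the abstract free join of $F_a$ and $F_b$, and let $\Theta\colon F_a\ast F_b\to C$ be the affine surjection restricting to the identity on each factor. By the first implication the faces of $F_a\ast F_b$ are the free joins of a face of $F_a$ with a face of $F_b$, and $\Theta$ carries the one with parts $G_1\subseteq F_a$, $G_2\subseteq F_b$ onto the face $\mathrm{conv}(G_1\cup G_2)$ of $C$; under $\mathcal F(F_a\ast F_b)\cong\mathcal L_1\times\mathcal L_2\cong\mathcal F(C)$ this is exactly the given isomorphism. It suffices to prove that $\Theta$ is a bijection, equivalently that $\dim C=\dim F_a+\dim F_b+1$.

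\medskip

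I would prove this by induction on $\dim C$, the cases $\dim C\le 1$ being immediate (then $\mathcal L(C)$ is $\mathbf 2$, which is irreducible, or $\mathbf 2\times\mathbf 2$, and $C$ is a segment $=\{pt\}\ast\{pt\}$). For the inductive step, let $H$ be a proper face of $C$, with parts $G_1=H\cap F_a$ and $G_2=H\cap F_b$. If one of $G_1,G_2$ is empty, $H$ lies in the other factor and $\Theta$ restricted to the corresponding face of $F_a\ast F_b$ is plainly a bijection onto $H$. If both are nonempty, then $\mathcal F(H)\cong\mathcal F(G_1)\times\mathcal F(G_2)$ is a nontrivial product and $\dim H<\dim C$, so by the inductive hypothesis $\dim H=\dim G_1+\dim G_2+1$; since the matching face of $F_a\ast F_b$ is the free join of copies of $G_1$ and $G_2$ and so has the same dimension, $\Theta$ restricted to it is a surjective affine map between convex bodies of equal dimension, hence a linear isomorphism onto $H$. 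Thus $\Theta$ maps every proper face of $F_a\ast F_b$ isomorphically onto the corresponding proper face of $C$. Since $\Theta^{-1}$ of a face is again a face (true for any affine map), and a face of $F_a\ast F_b$ strictly containing a coatom would have to be all of $F_a\ast F_b$ --- which is impossible here, as $\Theta$ is onto while every coatom of $C$ is proper --- one deduces that $\Theta$ is injective on the relative boundary $\partial(F_a\ast F_b)$, the union of the proper faces; hence $\Theta$ restricts to a homeomorphism $\partial(F_a\ast F_b)\to\partial(C)$.

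\medskip

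Finally, the relative boundary of a $k$-dimensional convex body is homeomorphic to $S^{k-1}$, and spheres of distinct dimensions are not homeomorphic, so $\dim(F_a\ast F_b)=\dim C$; a surjective affine map between convex bodies of equal dimension is a bijection. Therefore $\Theta$ is a linear isomorphism and $C\cong F_a\ast F_b$ is $\ast$-decomposable. The step I expect to be the real obstacle is exactly this passage from lattice to geometry: the direct-product structure of $\mathcal L(C)$ carries no information about $\dim C$ (a ball and a simplex may have face lattices of the same rank), so some topology must be injected; the work lies in organizing the induction so that the \emph{proper} faces of $C$ --- equivalently, its relative boundary --- are rigidly determined by the product structure, after which the sphere argument forces the ambient dimension to be right.
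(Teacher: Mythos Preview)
Your argument is correct. The forward direction is essentially the paper's, compressed; the one step you elide --- that $\mathrm{conv}(F_1\cup F_2)$ is itself a \emph{face} of $C$, not merely a convex subset --- is exactly where the general-position hypothesis bites, and deserves a line (the paper spends a paragraph on it).

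The reverse direction is genuinely different. The paper never constructs a comparison map: having found faces $C_1,C_2$ with $C_1\wedge C_2=\emptyset$ and $C_1\vee C_2=C$, it shows \emph{directly} that $\mathrm{span}(C_1)$ and $\mathrm{span}(C_2)$ are disjoint and share no direction, via an elementary planar-quadrilateral contradiction. A hypothetical common point (or direction) lets one draw two lines, one through an interior point of each $C_i$, producing four boundary points $a_1,b_1,a_2,b_2$ spanning a $2$-plane; the diagonals of this quadrilateral meet at an interior point of $C_1\vee C_2=C$, yet that point also lies in $C_1\vee\{a_2\}$, which the product structure forces to be a proper face --- contradiction. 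Your route instead builds the affine surjection $\Theta$ from the abstract free join, checks by induction on $\dim C$ that $\Theta$ is a linear isomorphism on every proper face, uses the preimage-of-a-face property to conclude $\Theta$ restricts to a homeomorphism of relative boundaries, and then invokes invariance of dimension for spheres to force $\dim(F_a\ast F_b)=\dim C$. This is clean and makes the role of the boundary explicit, but it imports a nontrivial topological theorem; the paper's quadrilateral trick stays entirely inside affine convexity and uses nothing beyond the lattice hypothesis.
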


\begin{proof}
Let $C=C_{1}\ast C_{2}$. Observe that each point $p$ of $C_{1}\ast C_{2}$
with $p\notin C_{i}$, lies in a unique segment joining a point $p_{1}$ of $%
C_{1}$ and a point $p_{2}$ of $C_{2}$. For, if a point lies in two segments $%
p_{1}p_{2}$ and $p_{1}^{\prime }p_{2}^{\prime }$ then the lines $%
p_{1}p_{1}^{\prime }$ and $p_{2}p_{2}^{\prime }$ are parallel or they
intersect, contradicting the assumptions on the spans of $C_{1}$ and $C_{2}$%
. Moreover, if a point $p\ $moves along a straight line in $C_{1}\ast C_{2}$
then the corresponding points $p_{1}$ and $p_{2}$ move along straight lines
in $C_{1}$ and $C_{2}$: If $p$ and $q$ are points in $C$ and $x\in $ $pq$
then $x=tp+(1-t)q=t\lambda p_{1}+t(1-\lambda )p_{2}+(1-t)\mu
q_{1}+(1-t)(1-\mu )q_{2}$ which can be rewritten as a linear combination of
a point in $p_{1}q_{1}$ and a point in $p_{2}q_{2}$ with coefficients adding
up to 1 so $x_{1}\in p_{1}q_{1}$ and $x_{2}\in p_{2}q_{2}$. Now if $%
C_{i}^{\prime }$ is a face of $C_{i}$ then $C_{1}^{\prime }\ast
C_{2}^{\prime }$ is a face of $C_{1}\ast C_{2}$. For, if $x\in C_{1}^{\prime
}\ast C_{2}^{\prime }$ and $x=\lambda p+(1-\lambda )q$ with $p,q\in
C_{1}\ast C_{2}$, then $x_{1}$ lies in $p_{1}q_{1}$ and $x_{2}$ lies in $%
p_{2}q_{2}$ so as $C_{i}^{\prime }$ is a face of $C_{i}$, $p_{i}$ and $q_{i}$
lie in $C_{i}^{\prime }$ so $p$ and $q$ lie in $C_{1}^{\prime }\ast
C_{2}^{\prime }$. Conversely, if $C^{\prime }$ is a face of $C_{1}\ast C_{2}$
and $p\in C^{\prime }$ then $p_{1}$ and $p_{2}$ lie in $C^{\prime }$ so $%
C^{\prime }=(C^{\prime }\cap C_{1})\ast (C^{\prime }\cap C_{2})$. It remains
to show that $C^{\prime }\cap C_{i}$ is a face of $C_{i}$. If $x\in
C^{\prime }\cap C_{1}$ and $x=\lambda p+(1-\lambda )q$ with $p,q\in
C_{1}\ast C_{2}$ then as $C^{\prime }$, and $C_{1}=C_{1}\ast \emptyset $ are
faces of $C_{1}\ast C_{2}$, $p$ and $q$ lie in $C^{\prime }$ and also in $%
C_{1}$, so $C^{\prime }\cap C_{1}$ is a face of $C_{1}$. $C^{\prime }\cap
C_{2}$ is a face of $C_{2}$. So $\mathcal{L}(C_{1}\ast C_{2})\simeq \mathcal{%
L}(C_{1})\times \mathcal{L}(C_{2}).$

If $\mathcal{L}(C)\approx \mathcal{L}_{1}\ast \mathcal{L}_{2}$ then $%
\mathcal{L}_{1}$ and $\mathcal{L}_{2}$ are isomorphic to sublattices of $%
\mathcal{L}(C)$, so $\mathcal{L}_{i}\approx \mathcal{L}(C_{i})$ for two
faces of $C$ with $C_{1}\wedge C_{2}=\varnothing $ and $C_{1}\vee C_{2}=C$.
To show that $C=C_{1}\ast C_{2}$ we need to prove that $span(C_{1})$ and $%
span(C_{2})$ are disjoint and have no directions in common. Suppose that $%
x\in span(C_{1})\cap span(C_{2})$. Take $x_{i}\in Int(C_{i})$ then the line
through $x$ and $x_{i}$ meets $\partial C_{i}$ at two points $a_{i}$ and $%
b_{i}.$ As $a_{2}$ lies in a proper subface $C_{2}^{\prime }$ of $C_{2}$,
the face generated by $C_{1}$ and $a_{2}$ lies in $C_{1}\vee C_{2}^{\prime }$
which is a proper subface of $C_{1}\vee C_{2}$. But the points $a_{1}$, $%
b_{1}$, $a_{2}$, $b_{2}$ determine a plane quadrilateral whose side $%
a_{i}b_{i}$ lies in the interior of $C_{i}$ so its diagonals intersect at an
interior point $c$ of $C_{1}\vee C_{2}$ so the face generated by $C_{1}$ and 
$a_{2}$ (which contains $c$) must be $C_{1}\vee C_{2}$, a contradiction. Now
suppose that $span(C_{1})$ and $span(C_{2})$ have a common direction $v$.
Take $x_{i}\in Int(C_{i})$ then the line through $x_{i}$ in the direction $v$
meets $\partial C_{i}$ at two points $a_{i}$ and $b$. As before $a_{1}$, $%
b_{1}$, $a_{2}$, $b_{2}$ determine a plane quadrilateral whose diagonals
intersect at an interior point $c$ of $C_{1}\vee C_{2}$, but $c$ lies in the
face generated by $C_{1}$ and $a_{2}$ which is a proper face of $C_{1}\vee
C_{2}$.
\end{proof}

\bigskip

Recall that a \textit{projective space} consists of a set $P$ (the points)
and a set $L$ (the lines) so that (a) Each pair of points is contained in a
unique line, (b) If $a,b,c,d$ are distinct points and the lines $ab$ and $cd$
intersect, then the lines $ac$ and $bd$ intersect (c) Each line contains at
least 3 points and there are at least 2 lines (d) Every chain of subspaces
(also called \textit{flats}) has finite length. The maximum length of a
chain starting with a point is the \textit{projective dimension} of the
space.

The flats of a projective space form an algebraic, atomic, irreducible,
modular lattice. Conversely, any lattice with these properties is the
lattice of flats of a projective space, whose points are atoms and whose
lines are joins of two atoms \cite{ATL}. It is a classic result of Hilbert 
\cite{FPG} that a projective space in which Desargues theorem holds is
isomorphic to the projective space $\mathbb{AP}^{n}$ determined by the
linear subspaces of $\mathbb{A}^{n+1}$, for some division ring $\mathbb{A}$,
and that $\mathbb{AP}^{n}$ and $\mathbb{BP}^{m}$ are isomorphic if and only
if $\mathbb{A}$ and $\mathbb{B}$ are isomorphic and $m=n$. All projective
spaces of dimension larger than $2$ are desarguesian, but there are many
non-desarguesian projective planes.

Examples of convex bodies whose face lattices determine the projective
spaces $\mathbb{RP}^{n}$, $\mathbb{CP}^{n}$ and $\mathbb{HP}^{n}$, and the
octonionic projective plane arise as sections of some classical cones.

\begin{example}
Let $\mathbb{F\in \{R}$,$\mathbb{C}$,$\mathbb{H}\}$, let $H_{n}(\mathbb{F})$
be the set of Hermitian (self-adjoint) $n\times n$ matrices with
coefficients in $\mathbb{F}$, and let $C_{n}(\mathbb{F})$ be the subset of
positive-semidefinite matrices ($A$ is positive-semidefinite if $\overline{v}%
Av^{T}\geq 0,$ for all $v\in $ $\mathbb{F}^{n}$). Then $C_{n}(\mathbb{F})$\
is a real cone whose face lattice is isomorphic to the lattice of subspaces
of $\mathbb{F}^{n}$.
\end{example}

To see this, let $A,B\in C_{n}(\mathbb{K})$, and let $\varphi (B)$ denote
the face generated by $B$. Then $A\in \varphi (B)$ if and only if $\ker
A\supseteq \ker B$. For, $A\in \varphi (B)$ $\Leftrightarrow \exists \lambda
>0$ such that $B-\lambda A\in $ $C_{n}(\mathbb{K})\Leftrightarrow \exists
\lambda >0$ such that $\overline{w}Bw^{T}\geq \lambda \overline{w}Aw^{T}\geq
0$ for all $w\in $ $\mathbb{F}^{n}$ $\Longleftrightarrow \overline{w}%
Bw^{T}=0 $ implies $\overline{w}Aw^{T}=0$ for all $w\in $ $\mathbb{F}%
^{n}\Longleftrightarrow \ker A\supseteq \ker B$ (since for $A\in C_{n}(%
\mathbb{K})$, $\overline{w}Aw^{T}=0$ if and only if $Aw^{T}=0$). Therefore $%
\varphi (A)\rightarrow \left( \ker A\right) ^{\perp }$ defines a bijection $\nu$ 
from the set of faces of $C_{n}(\mathbb{K})$ to the set of linear
subspaces of $\mathbb{K}^{n}$.\ To prove that $\nu$ is an isomorphism of
lattices observe that $\nu (\varphi (A)\vee \varphi (B))=\nu (\varphi
(A+B))=\left( \ker \left( A+B\right) \right) ^{\perp }=\left( \ker A\cap
\ker B\right) ^{\perp }=\left( \ker A\right) ^{\perp }\cup \left( \ker
B\right) ^{\perp }$, and on the other hand, if $\varphi (A)\wedge \varphi
(B) $ is a non-empty face, then it is generated by a matrix $C$ with $\ker
C=span(\ker A\cup \ker B)$, so $\nu (\varphi (A)\wedge \varphi (B))=\left( \ker
C\right) ^{\perp }=\left( \ker A\right) ^{\perp }\cap \left( \ker B\right)
^{\perp }$.

\begin{example}
Let $H_{3}(\mathbb{O)}$ be the set of Hermitian $3\times 3$ matrices over $%
\mathbb{O}$. Then the subset $C_{3}(\mathbb{O)}$ of all sums of squares of
elements in $H_{3}(\mathbb{O)}$ is a real cone whose face lattice determines
an octonionic projective plane.
\end{example}

This can be shown using the nontrivial fact that each matrix in $H_{3}(%
\mathbb{O})$ is diagonalizable by an automorphism of $H_{3}(\mathbb{O})$
that leaves the trace invariant \cite{O}, so:\newline
(a) A matrix $A$ in $H_{3}(\mathbb{O)}$ lies in $C_{3}(\mathbb{O)}$ if and
only if it can be diagonalized to a matrix $A^{\prime }$ with non-negative
entries, because if $A$ lies in $C_{3}(\mathbb{O)}$ then $A^{\prime }$ is a
sum of squares of matrices in $H_{3}(\mathbb{O)}$, which have non-negative
diagonal entries. \newline
(b) All the idempotent matrices in $H_{3}(\mathbb{O)}$ lie in $C_{3}(\mathbb{%
O)}$ as they are squares $(A=A^{2})$. The idempotent matrices of trace 1
correspond to the extreme rays of $C_{3}(\mathbb{O)}$ since they can't be
written as non-negative combinations of other idempotent matrices.\newline
(c) Each face of $C_{3}(\mathbb{O)}$ is generated by an idempotent matrix,
because in any cone all the positive linear combinations of the same set of
vectors generate the same face, so a diagonal matrix with non-negative
entries generates the same face as a matrix with only zeros and ones.\newline
(d) Any two idempotent matrices of trace 1 lie in a face generated by an
idempotent matrix of trace 2, because they can be put simultaneously in the
form $\left[ 
\begin{array}{ccc}
a & x & 0 \\ 
\overline{x} & b & 0 \\ 
0 & 0 & 0%
\end{array}%
\right] ,$ and these lie in the face generated by $\left[ 
\begin{array}{ccc}
1 & 0 & 0 \\ 
0 & 1 & 0 \\ 
0 & 0 & 0%
\end{array}%
\right] $.\newline
(e) $A\in C_{3}(\mathbb{O)}$ is an idempotent of trace 1 if and only if $I-A$
is an idempotent with trace 2. If $A$ and $B$ are idempotents of trace 1,
then $A$ lies in the face generated by $I-B$ if and only if $B$ lies in the
face generated by $I-A$. This duality and (d) show that any two faces
generated by idempotent matrices of trace 2 meet in a face generated by an
idempotent matrix of trace 1.

\section{Face lattices defining projective spaces.}

If $C$ is a convex body whose face lattice is modular and irreducible and $C$
is not strictly convex, the set of extreme points of $C$ is a projective
space with flats determined by the faces of $C$. We would like to know which
projective spaces arise in this way, and what convex bodies give rise to
them.

By Blaschke selection theorem \cite{HCG}, the space of all compact, convex
subsets of a convex body in $\mathbb{R}^{n}$, with the Hausdorff metric, is
compact. So the subspace formed by the compact convex subsets of the
boundary is closed, but the subspace formed by the faces is not closed in
general.

\begin{lemma}
\label{compact} If $C$ is a convex body whose face lattice $\mathcal{F}$ is
modular, then the set $\mathcal{F}_{h}$ of faces of rank $h$, with the
Hausdorff metric, is compact for each $h$.
\end{lemma}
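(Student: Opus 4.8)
The strategy is to use Blaschke's selection theorem to pass to a convergent subsequence of a sequence of rank-$h$ faces and then show that the limit is again a face of rank $h$. First I would take a sequence $F_1, F_2, \ldots$ of faces of rank $h$ in $\mathcal{F}_h$. Since all the $F_i$ are compact convex subsets of the fixed convex body $C \subset \mathbb{R}^n$, Blaschke's theorem gives a subsequence converging in the Hausdorff metric to some compact convex set $F_\infty \subseteq C$. The goal is to prove that $F_\infty \in \mathcal{F}_h$, i.e.\ that $F_\infty$ is a face and that $\operatorname{rk}(F_\infty) = h$. Compactness of $\mathcal{F}_h$ then follows by the usual subsequence characterization.

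The first thing to establish is that $F_\infty$ is a face. This is where the modularity hypothesis does real work, since in general (as the excerpt notes) a Hausdorff limit of faces need not be a face: one must rule out the limit "sliding into the interior" or landing strictly between two rank levels. I would argue via the rank function, which exists by the cited characterization of modularity for algebraic atomic lattices, and is monotone along the face lattice. The plan is to show semicontinuity of rank under Hausdorff limits: if $G$ is the (genuine) face of $C$ generated by $F_\infty$, then one expects $\operatorname{rk}(G) \leq h$, because $G$ cannot be "larger" than a limit of rank-$h$ faces — any extreme point of $C$ lying in $G$ should be approximable by points of the $F_i$, and the join of those bounded-rank faces stays bounded in rank. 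Conversely, since each $F_i$ has rank exactly $h$, I would show $\operatorname{rk}(G) \geq h$ by exhibiting, inside $F_\infty$, a chain of $h+1$ faces obtained as limits of a chain of subfaces of the $F_i$ (choosing the subfaces compatibly along the subsequence, again using Blaschke to extract convergence at each rank level and a diagonal argument). Squeezing gives $\operatorname{rk}(G) = h$. Finally I would check $F_\infty = G$: since $F_\infty \subseteq G$ and both are compact convex, if the inclusion were proper then $F_\infty$ would miss an extreme point $p$ of $G$; but $p = \lim p_i$ with $p_i$ extreme points in $F_i$ (rank considerations force the $F_i$ to contain extreme points converging to the extreme points of $G$), contradicting Hausdorff convergence $F_i \to F_\infty$. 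Hence $F_\infty = G$ is a face of rank $h$.

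The main obstacle I anticipate is the rank semicontinuity argument — specifically controlling the rank of the face \emph{generated by} a Hausdorff limit, since generation is not obviously continuous. The key technical device is that in a modular algebraic atomic lattice the rank is finite and the join of two faces has rank at most the sum of the ranks, so bounded-rank faces cannot suddenly generate an unbounded-rank face in the limit; combined with a careful diagonal extraction producing a full chain of length $h$ inside the limit, this should pin the rank down exactly. The remaining steps — applying Blaschke, checking convexity of the limit, and the extreme-point approximation at the end — are routine.
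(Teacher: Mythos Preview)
Your framework matches the paper's: apply Blaschke selection to get a subsequence $F_{i_j}\to K$, let $F$ be the face generated by $K$, and then pin down $\operatorname{rk}(F)=h$ and $K=F$. The gap is in how you propose to control the rank. Your upper-bound sketch (``any extreme point of $C$ lying in $G$ should be approximable by points of the $F_i$, and the join of those bounded-rank faces stays bounded in rank'') does not go through: $G$ is the face \emph{generated} by the limit $F_\infty$, so its extreme points need not lie in $F_\infty$ and hence need not be limits of points of the $F_i$; and even if they were, the sub-additivity of rank under joins says nothing about the rank of a face generated by a Hausdorff limit of rank-$h$ faces. Your lower-bound chain argument is circular unless set up as an induction on $h$, and then the base case is exactly the nontrivial statement that a limit of extreme points is extreme --- false for general convex bodies, so modularity must enter there, and your proposal does not say how. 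The same issue undermines your $F_\infty=G$ step: the claim that extreme points of $G$ are limits of extreme points of the $F_i$ is precisely what needs proof.

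The device you are missing is the use of \emph{complements}. The paper argues the lower bound as follows: if $\operatorname{rk}(F)<h$, pick a face $F^{c}$ of rank $n-h$ disjoint from $F$ (this exists by complementation in a modular lattice). Since $F$ and $F^{c}$ are disjoint compacta, eventually $F_{i_j}\cap F^{c}=\varnothing$; but $\operatorname{rk}(F_{i_j})+\operatorname{rk}(F^{c})=n$, so modularity forces them to meet --- contradiction. The same trick, choosing $F'$ of rank $n-h$ meeting $F$ only at a point $p\in F\setminus K$, gives $K=F$ once $\operatorname{rk}(F)=h$. For the upper bound the paper runs a \emph{downward} induction on $n-h$: pick an extreme point $p\notin F$, form rank-$(h{+}1)$ faces $G_{i_j}=F_{i_j}\vee p$, and apply the inductive hypothesis to conclude their limit generates a face of rank $h{+}1$ strictly containing $F$, whence $\operatorname{rk}(F)\le h$. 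So the crucial input from modularity is not sub-additivity of rank under joins but rather the existence of complements of prescribed rank, which converts rank inequalities into disjointness statements that interact well with Hausdorff convergence.
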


\begin{proof}
Let $F_{i}$ be a sequence of faces of rank $h$. Then $F_{i}$ has a
subsequence $F_{i_{j}}$ that is convergent in $\mathcal{C}$, and its limit
is a compact convex set $K$ contained in $\partial C$, so $K$ generates a
proper face $F$ of $C$ of some rank $h^{\prime }$. We claim that $h^{\prime
}=h$ and $K=F$.

If the rank of $F$ was less than $h$, there would be a face $F^{c}$ of $C$
of rank $n-h$ with $F^{c}\cap F=\phi $. As $F$ and $F^{c}$ are two disjoint
compact sets in $\mathbb{R}^{n}$, there exists $\epsilon >0$ such that the $%
\varepsilon $-neighborhoods of $F$ and $F^{c}$ in $\mathbb{R}^{n}$ are
disjoint. But as $F_{i_{j}}\rightarrow K\subset F$ in the Hausdorff metric,
then for sufficiently large $j$, $F_{i_{j}}$ is contained in the $%
\varepsilon $-neighborhood of $F$, therefore $F_{i_{j}}\cap F^{c}=\phi $,
but these 2 faces have ranks that add up to $n$, so they should meet, a
contradiction.

If the rank of $F$ is $h$ and $K\neq F$, there is an extreme point $p\in F-K$%
, and there is a face $F^{\prime }$ of rank $n-h$ that meets $F$ only at $p$%
, so $F^{\prime }\cap K=\phi $ and the previous argument gives a
contradiction.

To show that the rank of $F$ cannot be larger than $h$, proceed inductively
on $n-h$. As a limit of proper faces is contained in a proper face, the
claim holds if $n-h=1$. Given a sequence $F_{i}$ of faces of rank $h$, let $%
F $ be a face generated by the limit of a convergent subsequence $F_{i_{j}}$%
. If $p$ is an extreme point of $C$ not in $F$, then for sufficiently large $%
j$, $p\notin F_{i_{j}}$ (otherwise $p$ would be in $F$). Let $G_{i_{j}}$ be
a face of rank $h+1$ containing $F_{i_{j}}$ and $p$. Now we can assume
inductively that the limit of a convergent subsequence of $G_{i_{j}}$
generates a face of rank $h+1$. This face contains $F$ properly (because it
contains $p$) so $h^{\prime }<h+1$.
\end{proof}

Now recall that a \textit{topological projective space} is a projective
space in which the sets of flats of each rank are given nontrivial
topologies that make the join and meet operations $\vee $ and $\wedge $
continuous, when restricted to pairs of flats of fixed ranks whose join or
meet have a fixed rank \cite{HIG}.

\begin{lemma}
If $C$ is a convex body whose face lattice is modular and irreducible then $%
C $ is strictly convex or\ the set of extreme points $\mathcal{E(}C\mathcal{)%
}$ is a topological projective space which is compact and connected.
\end{lemma}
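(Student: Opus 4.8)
The plan is this. If $C$ is strictly convex there is nothing to prove, so assume it is not. By the remark at the start of this section, $\mathcal{E}(C)$ is then a projective space whose flats of rank $h$ are precisely the faces of $C$ of rank $h$, and I would topologize the set $\mathcal{F}_{h}$ of flats of rank $h$ by the Hausdorff metric; on $\mathcal{F}_{1}=\mathcal{E}(C)$ this is just the Euclidean metric on extreme points. Lemma~\ref{compact} then gives at once that each $\mathcal{F}_{h}$, and in particular $\mathcal{E}(C)$, is compact. To see the topologies are nontrivial: by~\cite{B} a polytope with modular irreducible face lattice is a simplex, and a positive-dimensional simplex, being a pyramid over a simplex, is $\ast$-decomposable and hence has reducible face lattice by the $\ast$-decomposability lemma; so $C$ is not a polytope, whence $\mathcal{E}(C)$ (and every proper $\mathcal{F}_{h}$) is infinite, thus non-discrete (being compact metric) and non-indiscrete (being metric).

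Next I would check that $\vee$ and $\wedge$ are continuous on the domains prescribed in the definition. The crucial input is the part of Lemma~\ref{compact} establishing that the Hausdorff limit of a convergent sequence of faces of rank $h$ is again a face of rank $h$ --- not merely a set generating one. So let $F_{i}\to F$ in $\mathcal{F}_{h}$ and $G_{i}\to G$ in $\mathcal{F}_{k}$ with $rk(F_{i}\vee G_{i})=m=rk(F\vee G)$ for all $i$. Each $F_{i}\vee G_{i}$ lies in the compact space $\mathcal{F}_{m}$, so any subsequence of $(F_{i}\vee G_{i})$ has a further subsequence converging to some $H\in\mathcal{F}_{m}$; since $F_{i}\subseteq F_{i}\vee G_{i}$ and $G_{i}\subseteq F_{i}\vee G_{i}$ and Hausdorff limits respect inclusion, $F\subseteq H$ and $G\subseteq H$, so $F\vee G\leq H$ in $\mathcal{L}(C)$, and two comparable faces of equal rank coincide, whence $H=F\vee G$. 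As every convergent subsequence of $(F_{i}\vee G_{i})$ has limit $F\vee G$ and $\mathcal{F}_{m}$ is sequentially compact, $F_{i}\vee G_{i}\to F\vee G$. The argument for $\wedge$ is the same, using instead that $F_{i}\wedge G_{i}=F_{i}\cap G_{i}$ is contained in both $F_{i}$ and $G_{i}$, so its limit $H$ satisfies $H\subseteq F\wedge G$, and equality of ranks again forces $H=F\wedge G$. This makes $\mathcal{E}(C)$ a topological projective space.

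For connectedness, the key observation is that every face $F$ of rank $2$ is strictly convex: its proper faces have rank at most $1$ and so are empty or single extreme points, whereas every point of the relative boundary of $F$ lies on a supporting hyperplane of the affine hull of $F$, hence in a proper exposed face of $F$; therefore every boundary point of $F$ is an extreme point, i.e.\ $\partial F=\mathcal{E}(F)$. Now a line of the projective space is the join of two distinct atoms, hence a face of rank $2$ by the rank equation, hence a strictly convex face $F$ whose point set is $\mathcal{E}(F)=\partial F\cong S^{\dim F-1}$; since the line has at least $3$ points by axiom (c), $\dim F\geq 2$, so this sphere is connected, and its topology as a subspace of $\mathcal{E}(C)$ is the Euclidean one. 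Fixing $p_{0}\in\mathcal{E}(C)$, every $q\in\mathcal{E}(C)$ with $q\neq p_{0}$ lies on the line $p_{0}q$, which is connected and contains $p_{0}$, so $\mathcal{E}(C)=\bigcup_{q\neq p_{0}}p_{0}q$ is a union of connected sets all through $p_{0}$, hence connected.

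The step I expect to be the main obstacle is the continuity of $\vee$ and $\wedge$: it rests on the strengthened conclusion of Lemma~\ref{compact} --- that Hausdorff limits of rank-$h$ faces are themselves faces of rank $h$ --- together with the small but essential point that two comparable faces of equal rank are equal, which is what promotes a set-theoretic inclusion of limits to the lattice identity required. By comparison the connectedness is short, once one notices that a rank-$2$ face is forced to be strictly convex.
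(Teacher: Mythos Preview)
Your proof is correct and follows essentially the paper's approach: compactness from Lemma~\ref{compact}, continuity of $\vee$ and $\wedge$ via the subsequence-plus-rank-equality argument, and connectedness from projective lines being positive-dimensional spheres. Your only substantive addition is the explicit nontriviality argument for the topologies (via $C$ not being a polytope), which the paper leaves implicit; note also that the paper indexes extreme points as $\mathcal{E}=\mathcal{F}_{0}$, not $\mathcal{F}_{1}$.
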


\begin{proof}
A natural topology for the set of flats is given by the Hausdorff distance
between the faces. By lemma \ref{compact}, $\mathcal{E=F}_{0}$ is compact.
As the lattice is irreducible and has more than 2 points, the 1-flats have
more than 2 points, so (as they are topological spheres) they are connected.
Now every pair of points in $\mathcal{E}$ is contained in one of these
spheres, so $\mathcal{E}$ is connected (one can actually show that each.$%
\mathcal{F}_{h}$ is connected).

It remains to show that $\vee $ and $\wedge $ are continuous on the
preimages of each $\mathcal{F}_{h}$. Suppose $A_{i}\rightarrow A$ , $%
B_{i}\rightarrow B$ where $A_{i}\wedge B_{i}$ and $A\wedge B$ are faces
corresponding to $h$ flats. We need to show that $A_{i}\wedge
B_{i}\rightarrow A\wedge B$. By lemma \ref{compact} $C_{i}=A_{i}\wedge B_{i}$
has convergent subsequences and the limit of a convergent subsequence $%
C_{i_{\alpha }}$ is a face $C_{\alpha }$ corresponding to an $h$ flat. As $%
C_{i_{\alpha }}$ is contained in $A_{i_{\alpha }}$ and $B_{i_{\alpha }}$, $%
C_{\alpha }$ is contained in $A\wedge B$. But $C_{\alpha }$ and $A\wedge B$
are both faces corresponding to $h$ flats, so $C_{\alpha }=A\wedge B$.
Similarly, if $A_{i}\rightarrow A$ , $B_{i}\rightarrow B$ and $A_{i}\vee
B_{i}$ , $A\vee B$ are faces corresponding to $h$ flats, the limit of each
convergent subsequence of $D_{i}=A_{i}\vee B_{i}$ is a face $D$
corresponding to an $h$ flat. As $D_{i}$ contains $A_{i}$ and $B_{i}$, $D$
contains $A\vee B$, and as both faces correspond to $h$ flats they must be
equal.
\end{proof}

Let $C$ be any convex body. Denote by $\mathcal{B}(C\mathcal{)}\subset C$
the set of\ baricenters of faces of $C$ and let $b:\mathcal{F}(C)\rightarrow 
\mathcal{B}(C)$ the function that assigns to each face its baricenter.

\begin{lemma}
\label{convergence} (a) If $\mathcal{F}(C)$ is compact then $b$ is a
homeomorphism.

(b) If $\mathcal{E}(C)$ is compact then a sequence of faces $F_{i}$
converges to a face $F$ if and only if $\mathcal{E}(F_{i})$ converges to $%
\mathcal{E}(F)$.
\end{lemma}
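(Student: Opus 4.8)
The plan is to prove part (b) first, since it is the cleaner statement, and then deduce part (a).

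\emph{Part (b).} Assume $\mathcal{E}(C)$ is compact. The preliminary fact I would use is that the extreme points of a face $F$ of $C$ are precisely the extreme points of $C$ lying in $F$, so $\mathcal{E}(F)=\mathcal{E}(C)\cap F$ is compact and $F=\mathrm{conv}(\mathcal{E}(F))$ by Carath\'eodory. For the implication ``$\mathcal{E}(F_i)\to\mathcal{E}(F)$ implies $F_i\to F$'' I would just apply the convex-hull map, which satisfies $d_H(\mathrm{conv}\,A,\mathrm{conv}\,B)\le d_H(A,B)$ (a convex combination of points each within $\varepsilon$ of $B$ lies within $\varepsilon$ of $\mathrm{conv}\,B$), so $F_i=\mathrm{conv}(\mathcal{E}(F_i))\to\mathrm{conv}(\mathcal{E}(F))=F$. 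For the converse I would check the two Kuratowski inclusions inside the compact set $\mathcal{E}(C)$. If $p_j\in\mathcal{E}(F_{i_j})$ and $p_j\to p$, then $p\in\mathcal{E}(C)$ (it is closed) and $p\in F$ (since $p_j\in F_{i_j}\to F$), hence $p\in\mathcal{E}(C)\cap F=\mathcal{E}(F)$, which gives $\limsup\mathcal{E}(F_i)\subseteq\mathcal{E}(F)$. Conversely, given $p\in\mathcal{E}(F)$, pick $q_i\in F_i$ with $q_i\to p$ and write each $q_i$ as a convex combination of at most $n+1$ points of $\mathcal{E}(F_i)\subseteq\mathcal{E}(C)$; along a subsequence these points and their coefficients converge, and since $p$ is extreme in $C$ every limit point carrying positive coefficient must equal $p$, so some extreme point of $F_i$ converges to $p$. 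Hence $\mathcal{E}(F)\subseteq\liminf\mathcal{E}(F_i)$, and Kuratowski convergence in a compact space is Hausdorff convergence.

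\emph{Part (a).} Assume $\mathcal{F}(C)$ is compact. The plan is to show that $b:\mathcal{F}(C)\to\mathcal{B}(C)\subset\mathbb{R}^n$ is a continuous bijection, and then conclude, since a continuous bijection from a compact space onto a Hausdorff space is a homeomorphism. Surjectivity is the definition of $\mathcal{B}(C)$; injectivity holds because the barycenter $b(F)$ lies in the relative interior of $F$, so the smallest face of $C$ containing $b(F)$ is $F$ itself, whence $b(F)=b(G)$ forces $F=G$. The real work is continuity of $b$. The delicate point is that the center of mass of a convex body is \emph{not} continuous in the Hausdorff metric across bodies of different dimensions: a sequence of thin triangles converging to a segment has centroids converging to an essentially arbitrary point of that segment. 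What rescues the argument is that such degeneration cannot occur among faces: by Lemma~\ref{compact} each $\mathcal{F}_h$ is compact, hence clopen in the finite partition $\mathcal{F}(C)=\bigsqcup_{h}\mathcal{F}_h$, so a convergent sequence of faces eventually has constant rank, and therefore constant dimension equal to that of its limit. On convex bodies of one fixed dimension the center of mass \emph{is} Hausdorff-continuous (the affine hulls converge, and the $d$-dimensional volumes and first moments vary continuously), so $b$ is continuous, and part (a) follows.

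I expect the dimension-stability of convergent sequences of faces to be the only genuine obstacle; everything else is soft. In the setting of this section the face lattice is modular, so Lemma~\ref{compact} supplies it directly. If one wants part (a) for a general convex body under the bare hypothesis that $\mathcal{F}(C)$ is compact, one would have to re-run the argument of Lemma~\ref{compact}, where the rank bound on a complement of $F$ uses modularity; replacing that by the complementedness of $\mathcal{F}(C)$ together with compactness is the one point that needs care.
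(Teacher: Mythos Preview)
Your argument is correct. For (b) it parallels the paper's: the easy direction via $d_H(\mathrm{conv}\,A,\mathrm{conv}\,B)\le d_H(A,B)$ is identical, and for the converse the paper also argues by contradiction using compactness of $\mathcal{E}(C)$, though for the inclusion $\mathcal{E}(F)\subseteq\liminf\mathcal{E}(F_i)$ it takes approximants $q_i\in F_i$ with $q_i\to p$ that stay $\varepsilon/2$ away from $\mathcal{E}(F_i)$, places a short straight interval in $F_i$ centered at each $q_i$, and passes to a limiting interval through $p$ to contradict extremality---your Carath\'eodory decomposition of $q_i$ reaches the same contradiction more directly. For (a) you are actually more careful than the paper, which simply asserts that ``the function that assigns to each compact convex set in $\mathbb{R}^n$ its baricenter is continuous'' and concludes in one line; as you observe, this fails for the centroid when dimensions drop. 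Your fix via Lemma~\ref{compact} (each $\mathcal{F}_h$ compact, hence clopen in the finite partition $\mathcal{F}(C)=\bigsqcup_h\mathcal{F}_h$, so convergent sequences of faces eventually have constant rank and hence constant dimension) closes this gap in the modular setting the paper works in. An alternative that sidesteps the issue entirely is to replace the centroid by the Steiner point, which \emph{is} Hausdorff-continuous on all nonempty compact convex subsets of $\mathbb{R}^n$; with that choice the paper's one-line proof of (a) goes through verbatim and needs no rank considerations.
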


\begin{proof}
(a) The function that assigns to each compact convex set in $\mathbb{R}^{n}$
its baricenter is continuous, so $b:\mathcal{F}(C)\rightarrow \mathcal{B}(C)$
is a continuous bijective map from a compact Hausdorff space to a metric
space.

(b) The Hausdorff distance between two compact convex sets is bounded above
by the Hausdorff distance between their sets of extreme points.

If $F_{i}$ converges to $F$ but $\mathcal{E}(F_{i})$ doesn%
\'{}%
t converge to $\mathcal{E}(F)$ then there is a subsequence $\mathcal{E}%
(F_{i_{j}})$ that stays at distance at least $\varepsilon >0$ from $\mathcal{%
E}(F)$. For each $i_{j}$ there is an extreme point $p_{i_{j}}\in F_{i_{j}}$
whose distance from $\mathcal{E}(F)$ is larger than $\varepsilon $, or an
extreme point $q_{i}\in F$ whose distance from $\mathcal{E}(F_{i_{j}})$ is
larger than $\varepsilon $. If there is a convergent subsequence $%
p_{i_{k}}\rightarrow p\in F$ then $p$ is at distance at least $\varepsilon $
from $\mathcal{E}(F)$, so $p$ can't be an extreme point of $C.$

If there is a convergent subsequence $q_{i_{k}}\rightarrow q\in F$, take $%
p_{i_{k}}^{\prime }\in F_{i_{k}}$ with $p_{i_{k}}^{\prime }\rightarrow q$.
Eventually $\left\vert p_{i_{k}}^{\prime }-q_{i_{k}}\right\vert <\frac{%
\varepsilon }{2}$ so the distance from $p_{i_{k}}^{\prime }$ to $\mathcal{E}%
(F_{i_{k}})$ is at least $\frac{\varepsilon }{2}$, so $p_{i_{k}}^{\prime }$
is the center of a straight interval $I_{i_{k}}$ of length $\varepsilon $
contained in $F_{i_{k}}$. A convergent subsequence of these intervals yields
a straight interval centered at $q$ and contained in $F$, so $q$ can't be an
extreme point of $C$, contradicting the compacity of $\mathcal{E}(C)$.
\end{proof}

\begin{lemma}
\label{extension} If $C$ and $C^{\prime }$ are convex bodies with $\mathcal{F%
}(C)$ and $\mathcal{F}(C^{\prime })$ compact, then any continuous "face
preserving" map $\varphi :\mathcal{E}(C)\rightarrow \mathcal{E}(C^{\prime })$
extends naturally to a continuous map $\varphi :C\rightarrow C^{\prime }$.
\end{lemma}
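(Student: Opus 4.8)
The plan is to build the extension canonically, by recursion on the rank of faces using barycentric cones, and then to establish its continuity on all of $C$ by a separate induction on rank powered by Lemmas~\ref{compact} and~\ref{convergence}. Since $\varphi$ is face preserving it induces an order-preserving map $\Phi:\mathcal F(C)\to\mathcal F(C')$, where $\Phi F$ is the face of $C'$ generated by $\varphi(\mathcal E(F))$, and $\Phi G\subseteq\Phi F$ whenever $G\le F$. Also, since $\mathcal F(C)$ is compact, $\mathcal E(C)=\mathcal F_0(C)$ is a closed subset of $C$. Recall that if $F$ is a face with barycenter $b_F$, then every point of $F$ is of the form $(1-t)b_F+ty$ with $y$ in the relative boundary $\partial F$ and $t\in[0,1]$, uniquely so for $t\neq 0$ (so $F$ is the cone on $\partial F$ with apex $b_F$). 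We define $\varphi$ on $C$ by recursion on rank: $\varphi$ is given on the rank-$0$ faces, and if $\varphi$ is already defined on $\partial F$, extend it to $F$ by $\varphi\big((1-t)b_F+ty\big):=(1-t)b_{\Phi F}+t\,\varphi(y)$, with $\varphi(b_F):=b_{\Phi F}$; this is consistent on $\partial F$ (take $t=1$), and since $\varphi(y)\in\Phi G\subseteq\Phi F$ for the face $G$ generated by $y$ and $\Phi F$ is convex, it maps $F$ into $\Phi F\subseteq C'$. This yields a well-defined $\varphi:C\to C'$ extending the original map. (That \emph{some} continuous extension exists is immediate from a Tietze--Dugundji extension theorem, since $\mathcal E(C)$ is closed in the metric space $C$ and $C'$ is convex; what ``naturally'' asks for, and what later arguments will manipulate, is the explicit formula above.)

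It remains to prove this $\varphi$ is continuous, which I would do by induction on $h$ in the form: \emph{if $F_i$ are faces of rank $\le h$, $x_i\in F_i$, and $x_i\to x$, then $\varphi(x_i)\to\varphi(x)$}. For $h=0$ this is continuity of $\varphi$ on the closed set $\mathcal E(C)$. For the inductive step: in a given subsequence either infinitely many $F_i$ have rank $\le h$ (done by the hypothesis), or all of them eventually have rank $h+1$; in that case pass to a subsequence along which $F_i\to F^*$ in the compact space $\mathcal F_{h+1}(C)$ (Lemma~\ref{compact}), so $\mathcal E(F_i)\to\mathcal E(F^*)$ and $b_{F_i}\to b_{F^*}$ (Lemma~\ref{convergence}), and $x\in F^*$. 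Write $x_i=(1-t_i)b_{F_i}+t_iy_i$ with $y_i\in\partial F_i$, and pass to a further subsequence with $t_i\to t$, $y_i\to y$, and $\Phi F_i\to H^*$ in the compact space $\mathcal F(C')$ (so $b_{\Phi F_i}\to b_{H^*}$ by Lemma~\ref{convergence}). One checks $y\in\partial F^*$: otherwise $y$ generates $F^*$, while each $y_i$ generates a proper face $G_i$ of $F_i$, hence of rank $\le h$; passing to a subsequence on which the $G_i$ have constant rank and converge, their limit is a face of rank $\le h$ containing $y$, hence containing $F^*$, which forces its rank above $h$ --- impossible. So $(y,t)$ are the barycentric-cone coordinates of $x$ in $F^*$, giving $\varphi(x)=(1-t)b_{\Phi F^*}+t\,\varphi(y)$; and $\varphi(x_i)=(1-t_i)b_{\Phi F_i}+t_i\,\varphi(y_i)$ with $\varphi(y_i)\to\varphi(y)$ by the inductive hypothesis applied to the $y_i$ (which lie in the rank-$\le h$ faces $G_i$). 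Hence $\varphi(x_i)\to(1-t)b_{H^*}+t\,\varphi(y)$, which equals $\varphi(x)$ as soon as $b_{H^*}=b_{\Phi F^*}$ --- equivalently $H^*=\Phi F^*$ --- the remaining case $t=1$ being trivial since then $x=y$.

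The one step that is not routine is thus $\Phi F_i\to\Phi F^*$, i.e.\ continuity of the induced lattice map $\Phi$. From $F_i\to F^*$ and Lemma~\ref{convergence}(b) we get $\mathcal E(F_i)\to\mathcal E(F^*)$, hence $\varphi(\mathcal E(F_i))\to\varphi(\mathcal E(F^*))$ by uniform continuity of $\varphi$ on the compact set $\mathcal E(C)$; to turn this into $\Phi F_i\to\Phi F^*$ via Lemma~\ref{convergence}(b) inside $C'$ one needs $\mathcal E(\Phi F)$ to vary continuously with $F$, which I would get from the identity $\mathcal E(\Phi F)=\overline{\varphi(\mathcal E(F))}$. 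Here ``$\supseteq$'' is automatic ($\mathcal E(\Phi F)$ is closed, by Lemma~\ref{compact} applied to the convex body $\Phi F$), while ``$\subseteq$'' is exactly where the force of the ``face preserving'' hypothesis enters: in the case of interest $\varphi$ is an isomorphism of face lattices, so in fact $\mathcal E(\Phi F)=\varphi(\mathcal E(F))$ and there is nothing to prove, but in general one must read ``face preserving'' as the statement that $\varphi$ maps $\mathcal E(F)$ onto a dense set of extreme points of $\Phi F$. This identification is the step I expect to be the crux; granting it, everything else is bookkeeping with the two preceding lemmas.
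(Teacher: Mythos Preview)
Your proposal is correct and follows essentially the same approach as the paper: define the extension by coning from barycenters, then prove continuity by induction, with the crux being continuity of the induced map $\Phi:\mathcal F(C)\to\mathcal F(C')$. The paper establishes that continuity first (in one line: $F_i\to F\Rightarrow\mathcal E(F_i)\to\mathcal E(F)\Rightarrow\varphi(\mathcal E(F_i))\to\varphi(\mathcal E(F))\Rightarrow\mathcal E(\Psi F_i)\to\mathcal E(\Psi F)\Rightarrow\Psi F_i\to\Psi F$, using Lemma~\ref{convergence} twice) and then inducts on \emph{dimension}, splitting into the cases $F(a_i)\to F(a)$ and $F(a_i)\nrightarrow F(a)$; you instead induct on \emph{rank} and isolate the continuity of $\Phi$ at the end. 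Both routes are the same in substance, and you have correctly identified that the step ``$\varphi(\mathcal E(F))=\mathcal E(\Phi F)$'' is exactly where the meaning of ``face preserving'' is cashed in---the paper simply writes ``by definition'' at that point. One small remark: your appeals to compactness of $\mathcal F_{h+1}(C)$ and to rank-preservation under limits invoke Lemma~\ref{compact}, whose proof uses modularity; the paper's version avoids this by inducting on dimension and using only that $\mathcal F(C)$ is compact, but since the lemma is only ever applied in the modular setting this is harmless.
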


\begin{proof}
$\varphi $ determines a function $\Psi :\mathcal{F}(C)\rightarrow \mathcal{F}%
(C^{\prime })$. $\Psi $ is continuous because by lemma \ref{convergence}, $%
F_{i}\rightarrow F$ implies $\mathcal{E}(F_{i})\rightarrow \mathcal{E}(F)$,
then uniform continuity of $\varphi $ on $\mathcal{E}(C)$ implies that $%
\varphi (\mathcal{E}(F_{i}))\rightarrow \varphi (\mathcal{E}(F))$ so by
definition $\mathcal{E}(\Psi (F_{i}))\rightarrow \mathcal{E}(\Psi (F))$ and
so $\Psi (F_{i})\rightarrow \Psi (F)$. So $\varphi $ can be extended to a
continuous function $\varphi :\mathcal{B}(C)\rightarrow \mathcal{B}%
(C^{\prime })$ as $b\circ \Psi \circ b^{-1}$ (recall that $\mathcal{E}%
(C)\subset \mathcal{B}(C)$). Now we can extend $\varphi $ to the interiors
of the faces of $C$ defining it linearly on rays, as follows.

For each point $a\in C,$ let $F(a)$ be the unique face of $C$ containing $a$
in its interior and let $b(a)$ be the baricenter of $F(a)$. Although $F(a)$
and $b(a)$ are not continuous functions of $a$ on all of $C$, they are
continuous on the union of the interiors of the faces corresponding to $h$%
-flats for each $h$. If $a\neq b(a)$ let $p(a)$ be the projection of $a$ to $%
\partial F(a)$ from $b(a)$ and let $\lambda (a)=\frac{\left\vert
a-b(a)\right\vert }{\left\vert p(a)-b(a)\right\vert }$ (or $0$ if $a=b(a)$)
so $a=(1-\lambda (a))b(a)+\lambda (a)p(a)$. Define $\varphi (a)=(1-\lambda
(a))\varphi (b(a))+\lambda (a)\varphi (p(a)).$

Assume inductively that $\varphi $ is continuous on the union of $\mathcal{B}%
(C)$ and the faces of $C$ of dimension less than $d$ (this set is closed
because the limit of faces of dimension less than $d$ has dimension less
than $d$). and let's show that for each sequence of points $a_{i}$ in the
interiors of faces of dimension $d$, $a_{i}\rightarrow a$ implies $\varphi
(a_{i})\rightarrow \varphi (a)$. We may assume that the $a_{i}$ are not
baricenters, so $p(a_{i})$ is well defined.

Case 1. $F(a_{i})\rightarrow F(a)$ then $b(a_{i})\rightarrow b(a)$ by the
continuity of $b$ on faces.

If $b(a)\neq a$ then $p(a_{i})\rightarrow p(a)$ and $\lambda
(a_{i})\rightarrow \lambda (a)$ so $\varphi (a_{i})=(1-\lambda
(a_{i}))\varphi (b(a_{i}))+\lambda (a_{i})\varphi (p(a_{i}))\rightarrow
(1-\lambda (a))\varphi (b(a))+\lambda (a)\varphi (p(a))=\varphi (a)$.

If $b(a)=a$ then $\lim b(a_{i})=\lim a_{i}$ but $p(a_{i})$ may not converge,
so consider a convergent subsequence $p(a_{i_{j}})$: If $\lim
p(a_{i_{j}})\neq $ $\lim b(a_{i_{j}})=\lim a_{i_{j}}$ then $\lim \lambda
(a_{i_{j}})=0$ so $\varphi (a_{i_{j}})=\varphi (b(a_{i_{j}}))+\lambda
(a_{i_{j}})\left[ \varphi (p(a_{i_{j}}))-\varphi (b(a_{i_{j}}))\right]
\rightarrow \varphi (b(a))+0=\varphi (a)$. If $\lim p(a_{i_{j}})=$ $\lim
b(a_{i_{j}})$ then $\lim \varphi (p(a_{i_{j}}))=\lim \varphi (b(a_{i_{j}}))$
(by continuity of $\varphi $ in the baricenters and faces of lower
dimension) and as $\varphi (a_{i_{j}})$ lies between them, $\lim \varphi
(a_{i_{j}})=\lim \varphi (b\circ F(a_{i_{j}}))=\varphi (b(a))=\varphi (a)$.

Case 2. $F(a_{i})\nrightarrow F(a)$, then for any convergent subsequence $%
F(a_{i_{j}})$ with limit a face $F\neq F(a)$, $a$ lies in $F$ and so $a$
must lie in $\partial F$, so $\left\vert a_{i_{j}}-p(a_{i_{j}})\right\vert
\rightarrow 0$ and $\lambda (a_{i_{j}})\rightarrow 1$, so $\lim \varphi
(a_{i_{j}})=\lim (1-\lambda (a_{i_{j}}))\varphi (b(a_{i_{j}}))+\lambda
(a_{i_{j}})\varphi (p(a_{i_{j}}))=\lim \varphi (p(a_{i_{j}}))=\varphi (a)$
(by continuity of $\varphi $ on the faces of lower dimension).
\end{proof}

\begin{theorem}
\label{main} Let $C$ be a convex body whose face lattice defines a
n-dimensional projective space.

If $n=2$, then $C$ has dimension $5,8,14$ or $26$.

If $n>2$ (or the space is desarguesian) there is a face-preserving
homeomorphism from $C$ to a section of a cone of Hermitian matrices over $%
\mathbb{R}$, $\mathbb{C}$, or $\mathbb{H}$.
\end{theorem}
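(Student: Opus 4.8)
The plan is to exploit the classification of (topological) projective spaces and the structural lemmas already established. Since $C$ is a convex body whose face lattice defines an $n$-dimensional projective space, by the earlier lemmas $\mathcal{E}(C)$ is a compact connected topological projective space and $\mathcal{F}(C)$ is compact. The first step is to invoke the classification: by Hilbert's theorem (cited above as \cite{FPG}), if $n>2$ or the plane is desarguesian, the projective space is $\mathbb{AP}^n$ for some division ring $\mathbb{A}$; and because the space carries a compact connected topology making $\vee,\wedge$ continuous, the classification of compact connected topological projective spaces (see \cite{HIG}) forces $\mathbb{A}$ to be one of $\mathbb{R},\mathbb{C},\mathbb{H}$ (with $\mathbb{O}$ possible only in the non-desarguesian planar case, which is excluded here), while in the bare planar case $n=2$ the topological dimension of the point space $\mathcal{E}(C)$ is $2,4,8$ or $16$ and the ambient dimension of $C$ comes out to $5,8,14,26$ by a dimension count on $H_{n+1}(\mathbb{F})$. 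This disposes of the first assertion and reduces the second to: when $\mathcal{E}(C)\cong\mathbb{FP}^n$ for $\mathbb{F}\in\{\mathbb{R},\mathbb{C},\mathbb{H}\}$, produce a face-preserving homeomorphism $C\to$ a section of $C_{n+1}(\mathbb{F})$.

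The second step builds the comparison model. Fix a hyperplane section $D$ of the cone $C_{n+1}(\mathbb{F})$ transverse to all extreme rays, so that $D$ is a convex body with $\mathcal{F}(D)\cong$ lattice of subspaces of $\mathbb{F}^{n+1}$ $\cong$ lattice of flats of $\mathbb{FP}^n$, exactly as in Example 1; in particular $\mathcal{F}(D)$ is compact by Lemma \ref{compact} and $\mathcal{E}(D)\cong\mathbb{FP}^n$ as a topological projective space. We now have two convex bodies $C,D$ with isomorphic face lattices, and the isomorphism induces a bijection $\varphi\colon\mathcal{E}(C)\to\mathcal{E}(D)$ which is face-preserving by construction. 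The key point is that $\varphi$ is \emph{continuous}: this is a general fact about isomorphisms between compact connected topological projective spaces over the same division ring—any lattice isomorphism is automatically a homeomorphism on each $\mathcal{F}_h$ because the topology (Hausdorff metric on faces) is intrinsically determined by the lattice together with the convergence characterization in Lemma \ref{convergence}(b), and a lattice isomorphism carries convergent sequences of faces to convergent sequences. (Concretely: $F_i\to F$ in $\mathcal{E}(C)$ iff $\mathcal{E}(F_i)\to\mathcal{E}(F)$, a condition about the lattice structure and the spheres $\mathcal{F}_1$, which is preserved; one uses connectedness and compactness of the 1-flats to pin down the topology.) Hence $\varphi$ and $\varphi^{-1}$ are both continuous, i.e.\ $\varphi$ is a homeomorphism of the extreme point spaces.

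The third step is the extension, which is where Lemma \ref{extension} does the work: the continuous face-preserving bijection $\varphi\colon\mathcal{E}(C)\to\mathcal{E}(D)$ extends naturally to a continuous map $\varphi\colon C\to D$, and applying the same lemma to $\varphi^{-1}$ gives a continuous inverse, so $\varphi\colon C\to D$ is a face-preserving homeomorphism onto $D$, a section of $C_{n+1}(\mathbb{F})$. One must check that the two extensions are mutually inverse: both are built by the same recipe (linear interpolation along rays from baricenters to boundary, inductively over the dimension of faces), and since $\varphi$ maps baricenters to baricenters and respects the affine structure face-by-face—because on each face the lattice isomorphism restricts to an isomorphism of the corresponding smaller Hermitian cones, where the map is projectively linear—the composite is the identity on a dense set of each face and hence, by continuity, everywhere.

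I expect the main obstacle to be the automatic continuity of the lattice isomorphism $\varphi$ on extreme points. The abstract lattice isomorphism is given for free, but upgrading it to a homeomorphism requires knowing that the Hausdorff-metric topology on $\mathcal{F}_h$ is determined by the lattice—this is exactly the content of topological projective geometry over $\mathbb{R},\mathbb{C},\mathbb{H}$, where the point space is a manifold and the incidence geometry rigidifies the topology, but one has to either cite the relevant uniqueness result for compact connected topological projective spaces (\cite{HIG}) or argue directly via Lemma \ref{convergence}(b) that convergence of faces is a lattice-theoretic notion. A secondary technical point is verifying that $\varphi$ restricted to each face is genuinely affine-linear (up to the projective identification) so that the ray-wise extension composes correctly; this follows from the fact that the only face-preserving self-maps of a section of $C_m(\mathbb{F})$ fixing the projective structure are the projective linear ones, but it needs to be stated carefully.
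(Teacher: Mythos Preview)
Your treatment of the desarguesian case ($n>2$, or $n=2$ desarguesian) follows the same route as the paper: identify $\mathcal{E}(C)$ as a compact connected topological projective space, invoke the classification (topological Desargues $\Rightarrow$ coordinatized over a locally compact connected division ring, hence $\mathbb{R},\mathbb{C},\mathbb{H}$ by Pontryagin), and then use Lemma~\ref{extension} to extend the resulting homeomorphism $\mathcal{E}(C)\to\mathcal{E}(D)$ to all of $C$. Your worry about automatic continuity is resolved exactly by the citation you mention: \cite{HIG} states that the abstract isomorphism to $\mathbb{FP}^n$ is already a homeomorphism, and this is what the paper uses. Your alternative suggestion, that Lemma~\ref{convergence}(b) makes ``convergence of faces a lattice-theoretic notion,'' is not correct and should be dropped: in a strictly convex body every permutation of extreme points is a lattice automorphism, so the lattice alone cannot determine the topology. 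The rigidity genuinely comes from the classification of \emph{topological} projective spaces, not from the lattice.

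There is a real gap in your argument for $n=2$. You write that ``the ambient dimension of $C$ comes out to $5,8,14,26$ by a dimension count on $H_{n+1}(\mathbb{F})$.'' That count gives the dimension of the \emph{model} section of $C_3(\mathbb{F})$; it says nothing about $\dim C$ unless you already know $C$ is homeomorphic to such a section. For a non-desarguesian plane there is no $\mathbb{F}$ at all, and even in the Moufang (octonionic) case you have not produced the homeomorphism. The paper avoids this circularity by computing $\dim C$ directly from the geometry of $C$: the rank-1 faces are $(d{+}1)$-balls with $d$-sphere boundary (Adams forces $d\in\{1,2,4,8\}$), and an explicit continuous bijection is built from an open set in $\partial C$ to $F_0\times(\partial F_1\setminus\{p\})\times(\partial F_2\setminus\{p\})$ using the projection $x\mapsto (x\vee p)\wedge F_0$ together with the two intersection coordinates $\partial F(x)\cap\partial F_i$. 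This product has dimension $(d{+}1)+d+d=3d+1$, so $\dim C=3d+2$. You need an argument of this kind; knowing only $\dim\mathcal{E}(C)=2d$ does not by itself pin down $\dim C$.
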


\begin{proof}
First consider the case $n=2$. All the lines of a topological projective
plane $\mathcal{P}$ are homeomorphic because if $l$ is a line and $p$ is a
point not in $l$ then the projection $\phi :\mathcal{P}-p\rightarrow l$, $%
\phi (x)=(x\vee p)\wedge l$ is continuous and its restriction to each
projective line not containing $p$ is one to one. If the projective lines
are topological spheres then a famous result of Adams \cite[p.1278]{HIG},
shows that their dimension must be $d=0,1,2,4$ or $8$.

To compute the dimension of $C$ take 3 faces of rank 1, $F_{0}$, $F_{1}$ and 
$F_{2}$ so that $F_{1}$ and $F_{2}$ meet at a point $p$ not in $F_{0}$. The
projection $\phi :\mathcal{E}(C)-\{p\}\rightarrow \partial F_{0}$ extends to
a continuous map $\phi :\cup \left\{ F\text{ }|\text{ }F\text{ face of }C%
\text{, }p\notin F\right\} \rightarrow F_{0}$ whose restriction to each face
is one to one (see proof of lemma \ref{extension}). Now $U=\cup \left\{ IntF%
\text{ }/\text{ }F\text{ face of }C\text{, }p\notin F\right\} $ is an open
subset of $\partial C$ and the function $\Phi :U\rightarrow F_{0}\times
\left( \partial F_{1}-\{p\}\right) \times \left( \partial F_{2}-\{p\}\right) 
$ defined as $\Phi (x)=(\phi (x),\partial F(x)\wedge \partial F_{1},\partial
F(x)\wedge \partial F_{2})$ is continuous and bijective, so $U$ has the same
dimension as $F\times \partial F\times \partial F$, which is $3d+1$,
therefore $C$ has dimension $3d+2$. Note that the discrepancy between the
dimensions of the union of the boundaries of the faces ($2d$) and the union
of the faces ($3d+1$) arises because the boundaries of the faces overlap (as
the lines in a projective plane do) but the interiors of the faces are
disjoint. When $n>2$,\ there is a similar homeomorphism from an open subset
of $\partial C$ and a product $F_{0}\times \left( \partial
F_{1}-\{p\}\right) \times \left( \partial F_{2}-\{p\}\right) \times
...x\left( \partial F_{n}-\{p\}\right) $ where $F_{0}$ is a face of rank $%
r-1 $ and $F_{1},F_{2},...,F_{n}$ are faces of rank 1. So $\dim (C)=\dim
(F_{0})+rd+1$ and it follows by induction that $\dim C=\frac{n(n-1)}{2}d+n-1$%
.

Now assume that the projective space determined by$\ \mathcal{E}(C)$ is
desarguesian. Every topological desarguesian projective space is isomorphic
to a projective space over a \textit{topological} division ring $A$ (defined
on a line minus a point) and the isomorphism is a homeomorphism \cite[p.1261]%
{HIG}. By a classic result of Pontragin \cite[p.1263]{HIG} the only locally
compact, connected division rings are $\mathbb{R},$ $\mathbb{C}$ and $%
\mathbb{H}$ . So the projective space determined by $\mathcal{E}(C)$ is
isomorphic to $\mathbb{RP}^{r},\mathbb{CP}^{r}$ or $\mathbb{HP}^{r}$.
Therefore $\mathcal{E}(C)$ is isomorphic to $\mathcal{E}(C^{\prime })$ where 
$C^{\prime }$ is a section of a cone of Hermitian matrices, and so by \ref%
{extension} there is a face-preserving homeomorphism from $C$ to $C^{\prime
} $.
\end{proof}

\section{Face lattices defining affine spaces.}

Now let us consider closed (but not necessarily compact) convex sets in $%
\mathbb{R}^{n}$ whose faces\ meet as the subspaces of an affine space. An
abstract affine plane consists of a set of points and a set of lines so that
1) there are at least 2 points and 2 lines 2) every pair of points is
contained in a line and 3) given a line and a point not contained in it,
there is a unique line containing the point and parallel to the line. The
axioms of an abstract affine space are not so simple, but it is enough to
know that if $P$ is a projective space then the complement of a maximal flat
of $P$ is an affine space, and any affine space $A$ can be embedded in a
projective space in this fashion, by attaching to $A$ a point at infinity
for each parallelism class of affine lines.

Observe that if a closed convex set $C$ in $\mathbb{R}^{n}$ is non-compact,
it contains a ray (half of a euclidean line) and if $C$ contains a ray then
it contains all the parallel rays\ starting at points of $C$ (we say that $C$
contains an infinite direction). So if $C$ contains a line, $C$ is the
product of that line and a closed convex set $C^{\prime }$ of lower
dimension and the face lattice of $C$ \ and $C^{\prime }$ are isomorphic. So
from now on we will assume that $C$ doesn't contain lines.

It is easy to see that the faces of a polytope cannot determine an affine
space (the faces of rank $i$ would have dimension $i$, two parallel faces of
rank 1 generate a face of rank 2 with at least 4 vertices, but the sides of
a polygon don't define an affine plane).

\begin{example}
Let $C$ be a convex body in $\mathbb{R}^{n}$ whose faces determine a
projective space. Take a cone over $C$ and slice it with a hyperplane
parallel to a support hyperplane containing a maximal face. The result is a
closed non-compact convex set $C^{\prime }$ in $\mathbb{R}^{n}$ whose faces
determine an affine space. In particular, the cones of Hermitian matrices
have non-compact sections whose face lattice determines a real, complex or
quaternionic affine space or an octonionic affine plane.
\end{example}

$\mathbb{RP}^{n}$ can be seen as the space of lines through the origin in $%
\mathbb{R}^{n+1}$ or as the quotient of the unit sphere $\mathbb{S}^{n}$ (or
the sphere at infinity of $\mathbb{R}^{n+1}$) by the action of the antipodal
map. Identifying $\mathbb{R}^{n}$ with a hyperplane of $\mathbb{R}^{n+1}$
that doesn't contain the origin gives an embedding of $\mathbb{R}^{n}$ as a
dense open subset of $\mathbb{RP}^{n}$. The remaining points of $\mathbb{RP}%
^{n}$ correspond to lines through the origin in $\mathbb{R}^{n+1}$ that
don't meet the hyperplane, i.e., parallelism classes of lines in $\mathbb{R}%
^{n}$ (or pairs of antipodal points in the sphere at infinity)$.$ Define a
set in $\mathbb{RP}^{n}$ to be \textit{convex} if it is the image of a
convex set in $\mathbb{R}^{n}$ under one of these embeddings. As convex sets
in $\mathbb{RP}^{n}$ correspond to convex cones based at the origin of $%
\mathbb{R}^{n+1}$, convexity in $\mathbb{RP}^{n}$ doesn't depend on the
particular embedding, and a convex set in $\mathbb{RP}^{n}$ has the usual
properties of a convex set in $\mathbb{R}^{n}$.

Now if $C$ is a closed convex set in $\mathbb{R}^{n}$ that doesn't contain
lines, its closure $\overline{C}$ is a convex set in $\mathbb{RP}^{n}$. The
faces of $\overline{C}$ are the closures of faces of $C$ and their
intersections with the sphere at infinity modulo the antipodal map.

\begin{lemma}
In a closed convex set in $\mathbb{R}^{n}$ that has semi-modular face
lattice, two faces of rank 1 can share at most one direction, and it
corresponds to a ray.
\end{lemma}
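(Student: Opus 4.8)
The plan is to pass to the projective closure $\overline{C}\subset\mathbb{RP}^{n}$ discussed above, in which a \emph{direction} of a face $F$ (a ray of its recession cone $\mathrm{rec}(F)$) becomes a point of $\overline{F}\cap H_{\infty}$, where $H_{\infty}$ is the hyperplane at infinity; so $F_{1}$ and $F_{2}$ \emph{share a direction} exactly when $\overline{F_{1}}$ and $\overline{F_{2}}$ share a point of $H_{\infty}$. The assertion that a shared direction ``corresponds to a ray'' is the easy half: a common recession direction $v$ of $F_{1}$ and $F_{2}$ is a recession direction of $C$, and since $C$ contains no lines $\mathrm{rec}(C)$ is pointed, so $-v\notin\mathrm{rec}(C)$ and $v$ spans a ray rather than a line. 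The substance is to show that two \emph{distinct} rank $1$ faces $F_{1}\neq F_{2}$ cannot have two (equivalently, a $2$-dimensional cone of) common recession directions.

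First I would collect some facts coming from semi-modularity and the absence of lines, writing $\varphi(x)$ for the face generated by a point $x$. (i) \emph{A rank $1$ face is strictly convex}: it covers its extreme points and nothing lies strictly between them and it, so its only faces are $\emptyset$, its extreme points, and itself; in particular its relative boundary contains no segment. (ii) \emph{Semi-modularity forces $\mathrm{rk}(p\vee q)\leq1$ for distinct extreme points $p,q$}, since $p\wedge q=\emptyset$ has rank $-1$; hence any two distinct extreme points of $C$ lie in a unique rank $1$ face. (iii) \emph{If $v\in\mathrm{rec}(F_{1})$ and $b$ is an extreme point of $F_{1}$}, then $b+\mathbb{R}_{\geq0}v\subset F_{1}$, and by (i) each point of this ray other than $b$, being interior to a segment of $F_{1}$, is not extreme, hence lies in $\mathrm{relint}(F_{1})$; so $\varphi(b+v)=F_{1}$. (iv) \emph{If $F_{1}\neq F_{2}$ share a recession direction $v$, then $F_{1}\cap F_{2}=\emptyset$}: a common point $p$ would make the face $F_{1}\wedge F_{2}$ have rank $\leq0$, i.e.\ equal $\{p\}$, but then $p+v$ would be a second common point, a contradiction.

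For the main step, suppose $v,w\in\mathrm{rec}(F_{1})\cap\mathrm{rec}(F_{2})$ are linearly independent (if $w=-v$ then $F_{1}$, hence $C$, contains a line; and if either recession cone is at most $1$-dimensional there is nothing to prove). Choose extreme points $b_{1}\in F_{1}$, $b_{2}\in F_{2}$; they are extreme points of $C$ and $b_{1}\neq b_{2}$ by (iv). Using (iii), $\varphi(b_{i}+v)=F_{i}$, while $\varphi\!\left(\tfrac12(b_{1}+b_{2})\right)=b_{1}\vee b_{2}=:E$ has rank $\leq1$ by (ii), and $\varphi\!\left(\tfrac12(b_{1}+b_{2})+v\right)=\varphi\!\left(\tfrac12(b_{1}+v)+\tfrac12(b_{2}+v)\right)=F_{1}\vee F_{2}=:G$; moreover $G$ contains the $3$-dimensional prism $\mathrm{conv}\{b_{1},b_{2}\}+\langle v,w\rangle_{\geq0}$, whose only extreme points are $b_{1}$ and $b_{2}$. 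Since $F_{1}\cap F_{2}=\emptyset$, semi-modularity gives $\mathrm{rk}(G)\leq\mathrm{rk}(F_{1})+\mathrm{rk}(F_{2})-\mathrm{rk}(\emptyset)=3$, while $G$ properly contains $F_{1}$, so $2\leq\mathrm{rk}(G)\leq3$. The plan is to derive a contradiction from the prism: it produces, besides $F_{1}$ and $F_{2}$, the distinct rank $1$ face $E$ (and, using other pairs of extreme points of $F_{1}$ and $F_{2}$, still more), all below $G$, together with the data $\varphi(b_{i}+v)=F_{i}$ and $v\notin\mathrm{rec}(E)$ — which should force a chain of faces below $G$ longer than a graded lattice of rank $\leq3$ admits, equivalently a pair of extreme points of $G$ whose join has rank $\geq2$, contradicting (ii).

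The main obstacle is precisely this last step — producing the over-long chain (or the violating pair) explicitly inside $\mathcal{F}(C)$. The natural failure of semi-modularity caused by the configuration lives one level up, in the closure: $\overline{F_{1}}$ carries the $1$-dimensional face at infinity $\mathbb{P}(\mathrm{rec}(F_{1}))$, so the interval $[\emptyset,\overline{F_{1}}]$ has maximal chains of two different lengths; but $\mathcal{F}(\overline{C})$ need not be semi-modular even when $\mathcal{F}(C)$ is — for instance $C=\{(x,y):xy\geq1,\ x>0\}$ has modular face lattice while $\mathcal{F}(\overline{C})$ is not graded — so one cannot simply invoke modularity of $\overline{C}$, and must instead keep the bookkeeping inside $\mathcal{F}(C)$, tracking the faces pinned down by $b_{1},b_{2},v,w$. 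Lastly, the degenerate case $b_{2}-b_{1}\in\mathrm{span}(v,w)$ is handled by replacing $b_{2}$ with another extreme point of $F_{2}$ — possible because $F_{2}\not\subset b_{1}+\mathrm{span}(v,w)$, since otherwise $F_{1}$ and $F_{2}$, each containing the $2$-dimensional set $b_{i}+\langle v,w\rangle_{\geq0}$ and contained in that same $2$-plane, would coincide.
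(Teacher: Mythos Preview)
Your proposal has a genuine gap, and you name it yourself: after building the prism $\mathrm{conv}\{b_1,b_2\}+\langle v,w\rangle_{\geq 0}\subset G=F_1\vee F_2$ and bounding $2\le\mathrm{rk}(G)\le 3$, you never actually produce the over-long chain or the pair of extreme points whose join has rank $\ge 2$. Having several rank~$1$ faces $F_1,F_2,E,\ldots$ below a rank~$3$ element is no contradiction in a semi-modular lattice, and the face-counting you sketch does not close. There is also a misreading of the statement: in the paper a ``direction'' of $F$ is any direction of a segment lying in $F$ (equivalently any direction of $\mathrm{aff}(F)$), not only a recession direction. The clause ``it corresponds to a ray'' is thus not the triviality about pointed recession cones you describe --- its content is precisely that a \emph{common} segment direction of $F_1$ and $F_2$ is forced to be a recession direction of both.

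The paper's argument is short and entirely planar, avoiding the projective closure. Take parallel segments $l_i\subset F_i$ through relative interior points; each $l_i$ meets $\partial F_i$ in one or two extreme points. If, say, $l_1$ meets $\partial F_1$ at two extreme points $a_1,b_1$ while $l_2$ meets $\partial F_2$ at an extreme point $a_2$, the trapezoid on these parallel sides has three extreme vertices, and its diagonals cross at a point lying simultaneously in the relative interior of the rank~$2$ face $a_1\vee b_1\vee a_2$ and on the open segment $b_1a_2\subset b_1\vee a_2$ of rank~$1$ --- impossible. Hence each $l_i$ meets $\partial F_i$ in a single point, so the common direction is a ray direction. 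For uniqueness, if $v$ and $w$ are independent common (hence ray) directions, then $v-w$ and $w-v$ are both common segment directions; by the first step both would be ray directions of $F_1$, so $F_1$ would contain a line, contrary to hypothesis. This trapezoid trick is the missing idea that replaces your unfinished rank bookkeeping.
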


\begin{proof}
We are considering convex bodies without lines. Suppose that two rank 1
faces $F_{1}$ and $F_{2}$ have a common direction, i.e., there are segments
of parallel euclidean lines $l_{1}$ and $l_{2}$ lying in $F_{1}$ and $F_{2}$%
. We may assume that $l_{i}$ goes through an interior point of $F_{i},$ so $%
l_{i}$ meets $\partial F_{i}$ in one or two extreme points. If $l_{1}$ or $%
l_{2}$ has two extreme points then there is a convex quadrilateral with
sides in $l_{1}$ and $l_{2}$ with 3 extreme points as vertices. The interior
of the quadrilateral lies in the interior of the rank 2 face generated by
the 3 extreme points, but the intersection of its diagonals lies in the rank
1 face generated by 2 extreme points, a contradiction. So $l_{i}$ meets $%
\partial F_{i}$ in only one point and so $F_{i}$ contains a ray $l_{i}^{+}$.
If $F_{1}$ and $F_{2}$ have two common directions, there is a common
direction which meets $F_{1}$ in 2 points, giving the same contradiction.
\end{proof}

\begin{lemma}
If the faces of a closed convex set $C$ in $\mathbb{R}^{n}$ define an affine
space, then each face representing a line contains a unique ray, and faces
representing parallel lines contain parallel rays.
\end{lemma}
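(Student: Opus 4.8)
The plan is to work in the projective completion. Write $A=\mathcal F(C)$ for the affine space realized by the faces of $C$, let $P$ be its projective completion, and let $H_\infty$ be the hyperplane of $P$ with $P\setminus H_\infty=A$; recall that the points of $H_\infty$ are exactly the parallelism classes of lines of $A$. As explained just before the lemma, the closure $\overline C\subset\mathbb{RP}^n$ has as its faces the closures $\overline F$ of the faces $F$ of $C$ together with the sets $\overline F\cap H_\infty$ (their portions lying at infinity), where here $H_\infty=\mathbb{RP}^{n-1}$ is the hyperplane at infinity of $\mathbb{RP}^n$, i.e.\ the sphere at infinity of $\mathbb R^n$ modulo the antipodal map. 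This identifies $\mathcal F(\overline C)$ with $P$ in such a way that the face $C_\infty:=\overline C\cap H_\infty$ corresponds to the flat $H_\infty$ of $P$, each $\overline F$ corresponds to the projective closure in $P$ of the flat of $A$ represented by $F$, and $\overline F\cap H_\infty=\overline F\wedge C_\infty$ corresponds to the flat at infinity of that flat.

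Now let $F$ be a face of $C$ representing a line $\ell$ of $A$. First I would note that $\overline F$ corresponds to a projective line $\overline\ell$ of $P$, and that $\overline\ell\not\subseteq H_\infty$ because $\ell$ has affine points; hence $\overline\ell\cap H_\infty$ is a single point of $P$, since a projective line not contained in a hyperplane meets it in exactly one point. Translating back through the identification above, $\overline F\cap H_\infty=\overline F\wedge C_\infty$ is a single extreme point of $\overline C$ lying on $H_\infty$, i.e.\ a single direction $v$. But $\overline F\cap H_\infty$ is precisely the set of directions in which $F$ is unbounded; so $F$ is unbounded and all its recession directions equal $v$, and since $C$ — hence the convex subset $F$ — contains no line, the recession cone of $F$ is the ray $\mathbb R_{\geq 0}v$. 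Thus $F$ contains a ray and all rays in $F$ are parallel to $v$; this is the asserted uniqueness.

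For the last assertion, suppose $F$ and $F'$ represent parallel lines $\ell$ and $\ell'$ of $A$. By the construction of $P$, parallel lines determine the same point at infinity, that is $\overline\ell\cap H_\infty=\overline{\ell'}\cap H_\infty$; by the previous paragraph this forces $\overline F\cap H_\infty$ and $\overline{F'}\cap H_\infty$ to be the same extreme point of $\overline C$, hence the same direction $v$. Therefore the recession cone of $F$ is $\mathbb R_{\geq 0}v$ and that of $F'$ is also a ray in the direction $v$, so the rays contained in parallel faces are parallel.

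The only step with genuine content is the identification of $\mathcal F(\overline C)$ with the projective completion $P$ compatibly with the operations $F\mapsto\overline F$ and $F\mapsto\overline F\cap H_\infty$ — in particular that $\overline F\cap H_\infty$ is again a face of $\overline C$ and that, under the identification, it is the meet $\overline\ell\wedge H_\infty$ in $P$. This is essentially the content of the paragraph preceding the lemma together with the affine-to-projective completion recalled earlier; granting it, the rest reduces to the elementary projective fact that a line meets a non-containing hyperplane in one point, and to the remark that a line-free closed convex set whose only direction at infinity is $v$ has recession cone $\mathbb R_{\geq 0}v$. The preceding lemma — that two rank-$1$ faces can share at most one direction and that it corresponds to a ray — is consistent with, and subsumed by, this picture.
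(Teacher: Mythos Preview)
Your argument is circular. The identification you invoke --- that $\mathcal F(\overline C)$ is the projective completion $P$ of the affine space $A$, with $\overline F$ corresponding to the projective closure of the flat represented by $F$ and $\overline F\cap H_\infty$ to its flat at infinity --- is not the content of the paragraph preceding the lemma. That paragraph says only that the faces of $\overline C$ are, as \emph{sets}, the closures $\overline F$ and their intersections with the hyperplane at infinity; it says nothing about the lattice structure, ranks, or the correspondence with $P$. The identification you need is exactly what the theorem \emph{following} this lemma proves, and that proof uses this lemma as its key input (``The faces of $C$ representing affine lines are non-compact, and two of them share an infinite direction in $\mathbb R^n$ if and only if they represent parallel affine lines''). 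So you are assuming the conclusion of the very program the lemma is meant to support.

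Concretely, the step ``$\overline\ell\cap H_\infty$ is a single point of $P$, hence $\overline F\cap H_\infty$ is a single extreme point of $\overline C$'' has no content until you know the correspondence. A~priori nothing forbids $F$ from being compact (so $\overline F\cap H_\infty=\varnothing$) or from having several directions at infinity (so $\overline F\cap H_\infty$ is a positive-dimensional face); ruling out both possibilities is precisely what the lemma asserts. The paper does this by direct geometric arguments: first it shows a rank-$1$ face cannot be compact (a limiting parallelogram argument, using the affine parallel axiom to derive a contradiction from compactness), then it constructs a ray in the parallel face as a limit of secants, and finally it invokes the preceding lemma (two rank-$1$ faces share at most one direction) to force uniqueness. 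Those are the ideas you need to supply; the projective-completion viewpoint only becomes available \emph{after} they are in place.
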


\begin{proof}
We are assuming again that $C$ doesn't contain lines, so the points of the
affine space correspond to the extreme points of $C$. Each affine lines is
represented by\ the boundary of a convex set of dimension at least 2, which
is connected, so set of extreme points in $C$ is connected.

Let's first show that the faces representing affine lines cannot be compact.
Suppose that $C$ has a compact face $F$. Let $p$ and $q$ be two extreme
points in $F$ and let $q_{i}$ be a sequence of extreme points not in $F$
that converge to $q$. Let $F_{i}$ be the face generated by $p$ and $q_{i}$.
If $F_{i}$ is non-compact, it contains a ray $r_{i}$ through $p$. So $F_{i}$
contains the "parallelogram" determined by the interval $pq_{i}$ and the ray 
$r_{i}. $An infinite sequence of $l_{i}$'s would have a subsequence
converging to a ray $l$ through $q$, so the parallelogram determined by the
interval $pq$ and the ray $l$ would be contained in $\partial C$, so it
would have to be contained in a face of $C$, which would have to be $F$
because it contains $p$ and $q$. This contradicts the assumption that $F$ is
compact and shows that if $q_{i}$ is sufficiently close to $q$, the face
generated by $p$ and $q_{i}$ is compact. Now take a face $F^{\prime }$ that
doesn't meet $F$ (i.e., $F$ and $F^{\prime }$ represent parallel affine
lines). As $F$ is compact and $F^{\prime }$ is closed in $\mathbb{R}^{n}$,
there is an $\varepsilon $ neighborhood of $F$ that doesn't intersect $%
F^{\prime }$. By the previous argument there is a point $q_{i}$ not in $F$
so that the face $F_{i}$ generated by $p$ and $q_{i}$ is contained in the $%
\varepsilon $ neighborhood of $F$. So $F_{i}$ doesn't meet $F^{\prime }$,
but $F$ was supposed to be the only face containing $p$ and disjoint from $%
F^{\prime }$.

This proves that $F$ is non-compact, so it contains rays. Let's show that\
two faces representing parallel affine lines contain parallel rays. Let $p$
be an extreme point outside $F$, so $F$ and $p$ generate a face $H$
representing an affine plane. There are extreme points $%
p_{0},p_{1},p_{2},... $ in $F$ so that the sequence of intervals $p_{0}p_{i}$
converges to a ray $l_{+}$ contained in $F$ (because $F$ is closed). The
sequence of intervals $pp_{i}$ lie in $\partial H$ and converge to a ray $%
m_{+}$ parallel to $l_{+}$ and containing $p$ so (as $H$ is closed) $m_{+}$
is contained in a face $G$ of $\partial H$ representing an affine line. As
two faces that contain parallel rays cannot meet at a single point, $G$
doesn't meet $F$ so (as $F$ and $G$ are contained in $H$) $G$ represents the
affine line parallel to $F$ through $p$. If $F$ has nonparallel rays, one
can construct as before two nonparallel rays $l_{+}$ and $l_{+}^{\prime }$
in $F$ and faces $G$ and $G^{\prime }$ through $p$ and containing rays $m_{+}
$ and $m_{+}^{\prime }$. The uniqueness of parallel affine lines implies
that $G=G^{\prime }$, so $F$ and $G$ have more than one common direction,
contradicting the previous lemma.
\end{proof}

This shows that if the faces of a closed convex set $C$ define an affine
space, $C$ is non-compact. One can show that if the faces of a closed convex
set $C$ (containing no lines) define a projective space, $C$ must be
compact. For this, one has to give a topology to the space of closed convex
sets in $\mathbb{R}^{n}$ that makes it locally compact, show that this makes 
$\mathcal{E}(C)$ into a locally compact projective space, and observe that
these spaces are necessarily compact.

\begin{theorem}
If $C$ is a closed convex set in $\mathbb{R}^{n}$ whose faces determine an
affine space, there is a projective transformation in $\mathbb{RP}^{n}$
taking $\overline{C}$ to a compact convex set in $\mathbb{R}^{n}$ whose
faces determine a projective space. If the space is desarguesian, there is a
face-preserving homeomorphism from $C$ to a non-compact slice of a cone of
Hermitian matrices.
\end{theorem}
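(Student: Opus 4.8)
The plan is to pass to the projective completion $\overline{C}\subset\mathbb{RP}^{n}$, identify its face lattice with the lattice of flats of the projective completion of the affine space $\mathcal{E}(C)$, move $\overline{C}$ inside an affine chart by a projective transformation, and then invoke Theorem~\ref{main}. For the first step, recall that since $C$ contains no lines neither does its recession cone, and that (as recalled above) the faces of $\overline{C}$ are the closures $\overline{F}$ of the faces $F$ of $C$ together with the projectivizations $\overline{F}\cap\mathbb{RP}^{n-1}_{\infty}$ of their recession cones. I would check that the assignment sending a face $F$ of $C$ to the affine flat it spans in $\mathcal{E}(C)$, and sending $\overline{F}\cap\mathbb{RP}^{n-1}_{\infty}$ to the corresponding flat at infinity, is a lattice isomorphism from $\mathcal{F}(\overline{C})$ to the lattice of flats of the projective completion $P$ of $\mathcal{E}(C)$. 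Everything needed for this is supplied by the two preceding lemmas: each rank~$1$ face of $C$ (a line of $\mathcal{E}(C)$) contains exactly one ray, and faces representing parallel lines contain parallel rays, so $\overline{C}$ adjoins to $\mathcal{E}(C)$ precisely one new extreme point per parallelism class of lines --- exactly the points one attaches to $\mathcal{E}(C)$ to form $P$ --- while the faces of $\overline{C}$ contained in $\mathbb{RP}^{n-1}_{\infty}$ run through the flats of the hyperplane at infinity of $P$. In particular $F_{\infty}:=\overline{C}\cap\mathbb{RP}^{n-1}_{\infty}$ is a single face of $\overline{C}$ and a maximal flat of $P$, and since $\overline{C}\cap\mathbb{R}^{n}=C$ we have $C=\overline{C}\setminus F_{\infty}$.

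For the second step, consider the cone $\mathcal{K}\subset\mathbb{R}^{n+1}$ over $\overline{C}$. It is a closed convex cone contained in a half-space $\{t\geq 0\}$ (with $C$ sitting at height $1$ and the recession cone of $C$ at height $0$), and it is pointed: a line contained in $\mathcal{K}$ is either not horizontal, and then leaves $\{t\geq 0\}$, or is horizontal, and then lies in the height-$0$ slice of $\mathcal{K}$, which is the recession cone of $C$ and contains no line. Hence the dual cone $\mathcal{K}^{*}$ is full-dimensional; choosing a linear functional $\ell$ in its interior, $\ell$ is strictly positive on $\mathcal{K}\setminus\{0\}$, so $\mathcal{K}\cap\{\ell=1\}$ is a compact convex set, and the projective transformation of $\mathbb{RP}^{n}$ that sends the hyperplane $\{\ell=0\}$ to the hyperplane at infinity carries $\overline{C}$ onto an affine copy of this set, a compact convex set $C^{\sharp}\subset\mathbb{R}^{n}$. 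Since projective transformations preserve face lattices, the faces of $C^{\sharp}$ determine $P$; this proves the first assertion of the theorem.

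For the third step, assume $P$ is desarguesian (automatic when $\dim P>2$, which covers the case $n>2$). Applying Theorem~\ref{main} to $C^{\sharp}$ gives a face-preserving homeomorphism $h\colon C^{\sharp}\to C'$, where $C'$ is a compact section of a cone of Hermitian matrices over $\mathbb{R}$, $\mathbb{C}$ or $\mathbb{H}$. Composing $h$ with the projective transformation of the second step yields a face-preserving homeomorphism $\overline{C}\to C'$ carrying the maximal face $F_{\infty}$ to a maximal face $F'$ of $C'$, hence a face-preserving homeomorphism $C=\overline{C}\setminus F_{\infty}\to C'\setminus F'$. Finally, $C'\setminus F'$ is identified with a non-compact slice of a cone of Hermitian matrices exactly as in the Example above: $F'$ lies in a support hyperplane $H_{0}$ of $C'$, and central projection from the apex of the cone carries $C'\setminus F'$ homeomorphically and face-preservingly onto the non-compact set obtained by slicing the cone with a hyperplane parallel to $H_{0}$. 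Composing these maps gives the desired face-preserving homeomorphism.

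I expect the main obstacle to be the identification in the first step: verifying that every ray direction occurring in a face $F$ of $C$ is contributed by a rank~$1$ subface of $F$, and that the join and meet of faces of $\overline{C}$ translate into the projective join and meet in $P$, so that $\mathcal{F}(\overline{C})$ really is the flat lattice of the projective completion of $\mathcal{E}(C)$. Once this is in place, the passage into an affine chart and the appeal to Theorem~\ref{main} are routine.
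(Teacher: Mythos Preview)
Your proposal is correct and follows the paper's approach: identify $\mathcal{F}(\overline{C})$ with the lattice of flats of the projective completion of the affine space $\mathcal{E}(C)$ using the two preceding lemmas, then invoke Theorem~\ref{main}. You supply considerably more detail than the paper on the projective transformation moving $\overline{C}$ into an affine chart (your dual-cone argument) and on the final passage from $C'\setminus F'$ to a non-compact slice via central projection, both of which the paper leaves implicit in the single sentence ``the result for $C$ follows by applying theorem~\ref{main} to $\overline{C}$'', but the strategy is the same.
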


\begin{proof}
We need to show that if the face lattice of $C$ determines an affine space,
the face lattice of $\overline{C}\subset \mathbb{RP}^{n}$ determines its
projective completion. The faces of $C$ representing affine lines are
non-compact, and two of them share an infinite direction in $\mathbb{R}^{n}$
if and only if they represent parallel affine lines. The closure $\overline{C%
}\subset \mathbb{RP}^{n}$ contains one point at infinity for each infinite
direction in $C$, so $\overline{C}$ contains an extreme point at infinity
for each class of faces of $C$ representing parallel lines. This corresponds
precisely with the definition of the projective completion of the affine
space. Now the result for $C$ follows by applying theorem \ref{main} to $%
\overline{C}$.
\end{proof}

\section{Projective planes and the case $d=1$.}

The face lattice of a convex body $C$ (not a triangle) determines a
projective plane if every pair of extreme points is contained in a proper
face and every pair of faces with more than one point meet. By theorem \ref%
{main} this projective plane is compact and connected, so for some $d\in
\left\{ 1,2,4,8\right\} $, all the faces of $C$ have dimension $d+1$ and $C$
has dimension $n=3d+2$.

\begin{lemma}
\label{spans} A $d+1$ dimensional subspace of $\mathbb{R}^{n}$ is the span
of a face of $C$ if and only if it meets all the spans of faces of $C$ .
\end{lemma}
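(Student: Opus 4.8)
The plan is to prove both implications using the structure established so far: $C$ has all faces of dimension $d+1$, its face lattice is a compact connected topological projective plane, and (by the argument in the proof of Theorem \ref{main}) there is a homeomorphism between an open subset of $\partial C$ and a product of one face with two punctured face-boundaries. The forward direction is immediate: if $V$ is the span of a face $F$, then since any face $G$ meets $F$ (every two faces of a projective plane with more than one point meet), $V \cap \mathrm{span}(G) \supseteq \mathrm{span}(F \wedge G) \neq \emptyset$, indeed it contains the span of the nonempty face $F\wedge G$, which is at least a point and hence a line through the origin if we think in the cone picture — in any case $V$ meets every $\mathrm{span}(G)$. So all the work is in the converse.

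For the converse, suppose $V$ is a $(d+1)$-dimensional subspace meeting $\mathrm{span}(F)$ for every face $F$. First I would reduce to showing $V$ contains a line segment lying in $\partial C$: if $V$ contains a segment $\sigma \subseteq \partial C$, then $\sigma$ lies in some face $F_0$, and $\mathrm{span}(F_0) \subseteq V$; since $\dim \mathrm{span}(F_0) = d+1 = \dim V$, we get $V = \mathrm{span}(F_0)$. To produce such a segment, pick three faces $F_1, F_2, F_3$ in "general position" as in the proof of Theorem \ref{main} (say $F_2, F_3$ meet at a point $p \notin F_1$), and use the homeomorphism $\Phi$ from $U = \bigcup\{\mathrm{Int}\,F : p \notin F\}$ onto $F_1 \times (\partial F_2 - \{p\}) \times (\partial F_3 - \{p\})$. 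The idea is that $V$, by meeting all the spans, meets $U$ in something of the right dimension; more precisely, I would track how $V \cap \partial C$ sits inside this product. The key point is a dimension/degree count: the faces through a fixed interior point of $C$ sweep out $\partial C$, each face has dimension $d+1$, and a $(d+1)$-plane that hits all of their spans must actually be tangent to — i.e. equal the span of — one of them, because otherwise it would meet the interiors of a continuum of faces transversally and its intersection with $\partial C$ would have dimension $d$ while lying in no single face, contradicting that the interiors of the faces are disjoint and that $\partial C$ near such a point looks like the $(3d+1)$-dimensional set $U$ fibered by $(d+1)$-dimensional face-interiors.

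The main obstacle I anticipate is making this last "transversality forces tangency" argument rigorous without smooth structure — $C$ is only a topological object and the faces need not vary smoothly. I expect to get around this using the continuity of $\vee$ and $\wedge$ on the topological projective plane together with compactness (Lemma \ref{compact}): consider the set of faces $F$ such that $V \cap \mathrm{span}(F)$ has dimension $> 0$ (i.e. contains a whole ray, in the cone picture), show this set is nonempty by a connectedness argument on the line space $\mathcal{F}_1 \cong$ the projective plane, and then show that for such an $F$ one must have $\mathrm{span}(F) \subseteq V$ by comparing dimensions after noting $V$ meets the spans of all faces in the "pencil" through $F$, which forces $V$ to contain the limiting positions and hence all of $\mathrm{span}(F)$. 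An alternative, possibly cleaner route: pass to the cone $\widehat{C} \subseteq \mathbb{R}^{n+1}$, where $V$ becomes a linear subspace $\widehat{V}$ of dimension $d+2$, each face-span becomes a $(d+2)$-dimensional linear subspace, and "meets" becomes "intersects nontrivially as linear subspaces"; then use the isomorphism of the face lattice with the subspace lattice of the relevant division ring (from Theorem \ref{main}, in the desarguesian case) to turn the statement into a purely linear-algebraic one over $\mathbb{R}$, $\mathbb{C}$, or $\mathbb{H}$, and handle the planar non-desarguesian case separately by the topological argument above. I would present the topological argument as the main proof since it is uniform across all four values of $d$.
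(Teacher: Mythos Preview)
Your forward direction is fine, but the converse has a genuine gap at the very first step. You write: ``if $V$ contains a segment $\sigma \subseteq \partial C$, then $\sigma$ lies in some face $F_{0}$, and $\mathrm{span}(F_{0}) \subseteq V$''. The first clause is correct, but the second does not follow. From $\sigma \subset V$ and $\sigma \subset F_{0}$ you only get that $V$ and $\mathrm{span}(F_{0})$ share a line; both are $(d+1)$-dimensional affine subspaces of $\mathbb{R}^{3d+2}$, and for $d\ge 1$ two such subspaces can share a line without being equal (e.g.\ two $2$-planes through a common line in $\mathbb{R}^{5}$). So the reduction ``segment in $\partial C$ $\Rightarrow$ $V$ is a face span'' is false as stated, and everything after it is aimed at proving the wrong thing.

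Your alternative route via Theorem~\ref{main} also fails: that theorem produces a face-preserving \emph{homeomorphism} to a slice of a Hermitian cone, not a linear or projective map. The statement you are trying to prove is about affine spans in $\mathbb{R}^{n}$, which are not preserved by an arbitrary homeomorphism, so you cannot transport the problem to the model cone this way. (Indeed, Lemma~\ref{spans} is precisely what later allows one to upgrade the homeomorphism to a projective equivalence in the $d=1$ case.)

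The paper's argument is organized quite differently and is worth studying. Rather than trying to force $V$ to contain a whole face, it assumes $V$ is \emph{not} a face span and derives a contradiction via a map in the other direction: it sends each rank-$1$ face $F$ to (a point of) $V\cap\mathrm{span}(F)$, and shows this map is injective, or injective off a small set, on a manifold of faces. Comparing dimensions ($\dim\mathcal{F}_{1}=2d$, or $\dim\mathcal{F}_{1}^{p}=d$, versus $\dim V\le d+1$) then forces $d=1$, and a separate topological obstruction (no map from a closed surface to the plane whose only failure of injectivity is along a curve) finishes that case. The key move you are missing is this construction of an injective continuous map \emph{from the space of faces into $V$}, together with the case split according to whether $V$ contains extreme points of every face or misses the extreme points of some face.
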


\begin{proof}
Let $S$ be an affine subspace that intersects $span(F)$ for every $F\in 
\mathcal{F}_{1}$, the set of faces of rank 1. Then $\left\{ F\in \mathcal{F}%
_{1}\text{ }|\text{ }\dim (S\cap span(F))\geq i\right\} $ is closed in $%
\mathcal{F}_{1}$ for each $i$.

Case 1. $d=1.$ We claim that if $S$ is not the span of a face then $S$
cannot intersect $span(F)$ in more than one point. For, if $S\cap span(F)$
contains a line, then $span(S\cup F)$ is 3 dimensional. Take an extreme
point $p\notin span(S\cup F)$ and let $F_{1}$ and $F_{2}$ be 2 faces
containing $p$, and meeting $F$ at points $p_{1}$ and $p_{2}$ not in $S$. If 
$p_{1}^{\prime }$ and $p_{2}^{\prime }$ are points in $S\cap span(F_{1})\ $%
and $S\cap span(F_{2})\ $respectively, then $p$, $p_{i}$ and $p_{i}^{\prime
} $ are not aligned (otherwise $p$ would be in the span of $S\cup span(F)$)
and so the span of $p$, $p_{i}$ and $p_{i}^{\prime }$, which is $span(F_{i})$%
, is contained in $span(p$ $\cup S\cup F)$. But $span(p$ $\cup S\cup F)$ is
4 dimensional, so it cannot contain the 3 faces $F$, $F_{1}$ and $F_{2}$
because if it did, it would contain each face that meets $F$, $F_{1}$ and $%
F_{2}$ at $3$ different points, but every face is a limit of such faces, so
it would contain all the faces of $C$, but $C$ has dimension 5. This shows
that $S$ intersects each $span(F)$ at exactly one point, and so $S$ contains
at most one extreme point of $C$. The function $I:\mathcal{F}_{1}\rightarrow
S$ that maps each face $F_{i}$ to the point of intersection of $span(F)$
with $S$ is continuous, and as the spans of faces meet only at extreme
points, $I$ only fails to be injective on the faces containing the extreme
point in $S$ (if any). But $\mathcal{F}_{1}$ is a 2-dimensional closed
surface in which the faces that contain an extreme point form a closed
curve, and there are no continuous maps from a closed surface to the plane
that fail to be injective only along a curve.

Case 2. $S$ doesn't contain extreme points of some face $F$. Choose $F$ that
minimizes the dimension of the subspace $S\cap span(F)$. Then for every $%
F^{\prime }$ in a neighborhood of $F$, $S\cap span(F)$ is a subspace of
minimal dimension and with no extreme points. If $S^{\prime }$ is the
orthogonal complement of $S\cap span(F)$ in $S$ then $S^{\prime }$
intersects $span(F)$ in one point for all $F^{\prime }$ in a smaller
neighborhood $V$ of $F.$ Then the function $I:V\rightarrow S^{\prime }$ that
maps $F^{\prime }$ to $S^{\prime }\cap span(F)$ is continuous, and it is
injective as the spans of faces only meet at extreme points. But an
injective map between manifolds can only exist when the domain has dimension
no larger than the target so $2d\leq \dim S^{\prime }\leq \dim S\leq d+1$,
so $d=1$ and we are in case 1.

Case 3. $S$ contains extreme points of each face $F.$ As $C$ is convex,
either $S\cap C\subset \partial C$ or $\partial _{S}(S\cap C)=S\cap \partial
C$. In the first case $S\cap C$ is contained in a face $F_{1}$ of $C$ and so
either $S\cap C=F_{1}$ (so $F_{1}\subset S$) or there is an extreme point $p$
of $F_{1}$ not contained in $S$, but then a face $F_{2}$ that meets $F_{1}$
at $p$ doesn't meet $F_{1}\cap S\supset S\cap C$ so $S$ doesn't contain
extreme points of $F_{2}$.

Let $p$ be an extreme point not in $S$, and consider the set $\mathcal{F}%
_{1}^{p}$ of faces of rank 1 containing $p$. If $F$\ and $F^{\prime }$ are
distinct faces in $\mathcal{F}_{1}^{p}$, $S\cap F$ and $S\cap F^{\prime }$
are disjoint. Choose $F$ so that $S\cap F$ has minimal dimension, then for
all $F^{\prime }$ in some neighborhood $V$ of $F$, $S\cap F^{\prime }$ has
the same dimension and the map $I_{B}:\mathcal{F}_{1}^{p}\cap V$ $%
\rightarrow S\cap \partial C$ that sends $F^{\prime }$ to the baricenter of $%
S\cap F^{\prime }$ is continuous and injective. As $I_{B}$ is a map between
manifolds, $d=\dim \mathcal{F}_{1}^{p}\leq \dim S\cap \partial C\leq d$ and
so by domain invariance the image of $I_{B}$ is an open subset of $S\cap
\partial C=\partial _{S}(S\cap C)$. This implies that for each $F^{\prime
}\in \mathcal{F}_{1}^{p}\cap V$, $S\cap F^{\prime }$ consists of one point
(if a face of a convex set has more than 1 point, its baricenter is
arbitrarily close to points in the boundary that are not baricenters of
other faces, namely, the points in the face) and so, by hypothesis, $S\cap
F^{\prime }$ is an extreme point of $C$.

So part of the boundary of $S\cap C$ in $S$ is strictly convex, therefore
the line segment joining two extreme points in it lies in $Int_{S}(S\cap C)$%
, but that line segment lies in the face of $C$ containing the 2 extreme
points, so it must lie in $S\cap \partial C=\partial _{S}(S\cap C)$, a
contradiction.
\end{proof}

\begin{lemma}
The boundaries of the faces of rank 1 of $C$ are semi-algebraic sets. If $%
d=1 $, they are conic sections.
\end{lemma}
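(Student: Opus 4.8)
We plan to reduce the statement to a property of the family $\mathcal{S}$ of spans of rank-$1$ faces. A rank-$1$ face $F$ has only $1$-point faces as proper subfaces, so it is strictly convex, and hence $\partial F$ coincides with the set $\mathcal{E}(F)$ of extreme points of $F$. Since a proper face is the intersection of $C$ with its affine hull, and an extreme point of a face of $C$ is an extreme point of $C$, we get $\partial F=\mathcal{E}(C)\cap span(F)$. Using that two distinct members of $\mathcal{S}$ meet in a single point, necessarily an extreme point of $C$ --- a fact provided by the proof of Lemma \ref{spans} --- one also checks that
\[\partial F=\{\,x\in span(F)\ :\ x\in span(F')\ \text{for some face }F'\neq F\,\}.\]
So it suffices to show that $\mathcal{S}$ is a semi-algebraic subset of the space $G$ of $(d{+}1)$-dimensional affine subspaces of $\mathbb{R}^{n}$, and, in the case $d=1$, that each $span(F)\cap\mathcal{E}(C)$ is a conic.

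For semi-algebraicity the instrument is Lemma \ref{spans}, which says that $\mathcal{S}$ is precisely the set of $S\in G$ meeting $span(F')$ for \emph{every} $F'\in\mathcal{F}_{1}$. The aim is to convert this into a first-order condition over $\mathbb{R}$: one identifies $\mathcal{F}_{1}$ with the compact set $\mathcal{S}$ (Lemma \ref{compact}; the map $F\mapsto span(F)$ is a homeomorphism because $F=C\cap span(F)$), observes that ``$S$ misses $span(F')$'' is an open condition on the pair $(S,F')$, and exploits the convexity of $C$ --- concretely, the rigidity with which the convex hypersurfaces $\partial F$ fit together, one through each pair of extreme points --- to reduce the quantifier over all $F'$ to finitely many incidence conditions $span(F_{j})\cap S\neq\emptyset$ that are valid near $\mathcal{S}$. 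Each such condition is semi-algebraic in $S$, so $\mathcal{S}$ is semi-algebraic; then $\partial F$, being a projection of a semi-algebraic incidence set, is semi-algebraic by the Tarski--Seidenberg theorem, and so is $\mathcal{E}(C)=\bigcup_{F}\partial F$.

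For the case $d=1$ we fix a rank-$1$ face $F$ and work in the plane $\Pi=span(F)\cong\mathbb{R}^{2}$, where $\gamma=\partial F$ is a strictly convex closed curve. By Steiner's theorem, $\gamma$ is a conic as soon as, for two points $P,Q\in\gamma$, the ``second intersection'' correspondence $\phi\colon\mathrm{pencil}_{\Pi}(P)\to\mathrm{pencil}_{\Pi}(Q)$ --- sending a line $\ell\ni P$ that meets $\gamma$ again at $X$ to the line $QX$ --- is a projectivity; and $\phi$ is automatically not a perspectivity, since it carries the chord $PQ$ to the tangent of $\gamma$ at $Q$, a different line by strict convexity. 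To see that $\phi$ is projective we bring in an extreme point $A\notin F$ and the rank-$1$ faces $F_{AX}$ through $A$ and $X\in\gamma$: their spans form a family of $2$-planes through $A$ pairwise meeting only at $A$, living in the genuine affine space $\mathbb{R}^{5}$. Because Desargues's theorem holds in $\mathbb{R}^{5}$, a von Staudt-type argument should equip $\gamma$ with a well-defined projective structure compatible with projection from $A$; doing this for a second external point $B$, and comparing with the standard real-projective structures carried by projection from $P$ and from $Q$ inside $\Pi$, forces the correspondence $\phi$ to preserve cross-ratio. Hence $\gamma$ is a conic.

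We expect the last step to be the main obstacle: transferring the linear structure of $\mathbb{R}^{5}$ to a projective structure on the topological circle $\gamma$ and matching it with the affine structure of $\Pi$ closely enough for Steiner's theorem to apply. The other genuine point is the semi-algebraic reduction of the universal condition in Lemma \ref{spans} to a finite one, where the convexity of $C$ --- not merely its face lattice --- must be used.
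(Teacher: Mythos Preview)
Your outline identifies the right targets but leaves both hard steps genuinely open, and the paper closes them by different, more direct, means.

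For the semi-algebraic part, the reduction of the universal quantifier ``meets every $span(F')$'' to finitely many incidence conditions is the crux, and you propose to extract the finite family by a compactness-and-convexity argument ``valid near $\mathcal{S}$''. But a characterization that holds only in a neighborhood of $\mathcal{S}$ would merely show that $\mathcal{S}$ is open in the semi-algebraic set $T=\{S:S\cap span(F_{j})\neq\emptyset\ \forall j\}$; you still have to separate $\mathcal{S}$ from the other irreducible pieces of $T$, and nothing in your sketch does this. The paper's argument is cleaner and global: each condition $\{S:S\cap S'\neq\emptyset\}$ is Zariski-closed in the Grassmannian, so by the Noetherian descending chain condition the intersection over \emph{all} $S'\in\mathcal{S}$ equals the intersection over finitely many $S_{1},\ldots,S_{m}$, and by Lemma~\ref{spans} this intersection is exactly $\mathcal{S}$. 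Then $\partial F_{1}$ is the projection to $S_{1}$ of the real algebraic set
\[
X^{p}=\bigl\{(x_{1},\ldots,x_{m})\in S_{1}\times\cdots\times S_{m}:\ \dim\,span(x_{1},\ldots,x_{m})=d{+}1,\ p\in span(x_{1},\ldots,x_{m})\bigr\},
\]
hence semi-algebraic by Tarski--Seidenberg. No convexity beyond what went into Lemma~\ref{spans} is needed.

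For $d=1$, your Steiner route is attractive, but the obstacle you flag is a genuine gap, not a detail. The projectivity you need is internal to the plane $\Pi=span(F)$, while the only extra structure you invoke is the family of $2$-planes $span(F_{AX})$ in $\mathbb{R}^{5}$. These planes share only the single point $A$, so they do \emph{not} form a linear pencil, and Desargues in $\mathbb{R}^{5}$ gives no direct grip on cross-ratios along $\gamma$. Converting the abstract incidence projections from $A$ and $B$ into honest projectivities of $\Pi$ amounts to showing that the abstract plane $\mathcal{E}(C)$ is Pappian---which is exactly what Theorem~\ref{dim5} establishes \emph{using} this lemma, so the argument would be circular. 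The paper sidesteps this completely by explicit computation: choose seven extreme points $p_{0},\ldots,p_{6}$ and six faces $F_{1},\ldots,F_{6}$ in a $(7_{3})$ configuration, normalize projectively so that $p_{0}=0$, $p_{i}=e_{i}$ for $1\le i\le 5$, $p_{6}=(1,1,1,1,1)$, and parametrize a $2$-plane meeting $S_{1},S_{2},S_{3}$ by points $(a,b)\in S_{1}$, $(c,d)\in S_{2}$, $(e,f)\in S_{3}$. The three further conditions ``meets $S_{4},S_{5},S_{6}$'' are each \emph{linear} in $(e,f)$; eliminating $(e,f)$ leaves a single $3\times3$ determinant in $(a,b,c,d)$ which, for fixed $(c,d)$, factors as (linear)$\times$(quadratic) in $(a,b)$. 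Since $\partial F_{1}$ is strictly convex it cannot lie on the line, so it is the conic.

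In short: your plan is reasonable as a heuristic, but both places you mark as ``genuine points'' are in fact the whole difficulty, and the paper handles each by a concrete algebraic device (Noetherianity for the first, an explicit determinant for the second) rather than the soft arguments you outline.
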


\begin{proof}
By lemma \ref{spans}, the set $\mathcal{S}$ of spans of faces of $C$ is the
same as the set of $d+1$-dimensional subspaces of $\mathbb{R}^{n}$ that
intersect every element of $\mathcal{S}$. The set of all $d+1$-dimensional
affine subspaces of $\mathbb{R}^{n}$ forms a real algebraic variety and the
condition that the subspaces meet a fixed subspace is algebraic, so (by the
finite descending chain condition) there is a finite family of spans $%
S_{1},S_{2},...,S_{m}\in \mathcal{S}$ such that $S\in \mathcal{S}$ if it
intersects these $S_{i}$'s (see \cite{AG}).

Now for $(x_{1},x_{2},...,x_{m})\in S_{1}\times S_{2}...\times S_{m}$, the
subspace $span(x_{1},...,x_{m})$ has dimension at least $d+1$ (otherwise it
would be contained in two subspaces of dimension $d+1$ that meet each $S_{i}$%
, so they would both be in $\mathcal{S}$, but two spans can only meet in 1
point). So $span(x_{1},...,x_{m})$ lies in $\mathcal{S}$ if and only if its
dimension is $d+1$, and this happens if and only if some determinants (given
by polynomials on $x_{1},...,x_{m}$ ) vanish. Therefore the set $X=\left\{
(x_{1},x_{2},...,x_{m})\in S_{1}\times S_{2}...\times S_{m}\text{ }|\text{ }%
span(x_{1},...,x_{m})\in \mathcal{S}\right\} $ is real algebraic, as is the
set $X^{p}$ formed by the elements of $X$ that contain a fixed point $p$. If 
$F_{1}$ is the face in $S_{1}$ and $p$ is an extreme point of $C$ outside $%
F_{1}$ then $\partial F_{1}$ consists of the intersections of $S_{1}$ with
the elements of $\mathcal{S}$ containing $p$. So $\partial F_{1}$ is the one
to one projection of the algebraic set $X^{p}$ to $S_{1}$, so $\partial
F_{1} $ is at least semi-algebraic.

\medskip\centerline{\includegraphics{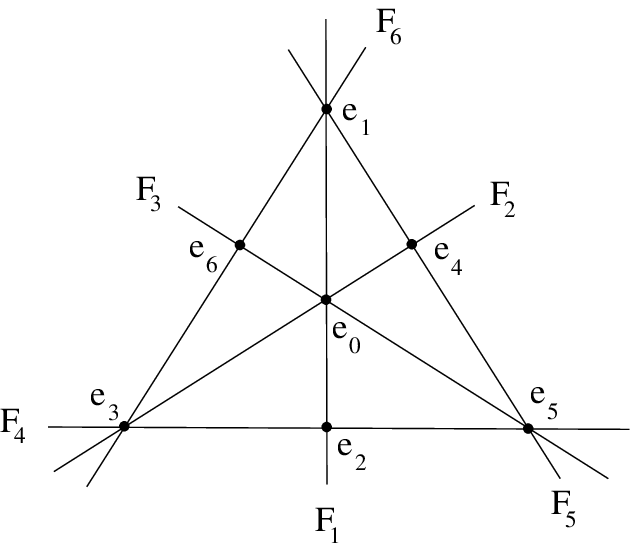}} \centerline{\sr Figure 1}
\medskip

Now assume $d=1$ so $n=5$. Every projective plane has 7 points and 6 lines
so that each line contains 3 points as in figure 1, so $C$ has 7 extreme
points and 6 faces intersecting in that way. The 7 points are in general
position in $\mathbb{R}^{5}$ because as each face of $C$ is spanned by 3
points, the span of any 6 of those points contains the span of 3 faces,
which is all of $\mathbb{R}^{5}$. Therefore we may assume (by applying a
projective transformation) that the 7 points are $%
p_{0}=(0,0,0,0,0),p_{1}=(1,0,0,0,0),...,p_{5}=(0,0,0,0,1),p_{6}=(1,1,1,1,1)$%
. Let $S_{i}$ be the plane spanned by the face $F_{i}$. A plane $S$ that
intersects $S_{1},S_{2}$ and $S_{3}$ has a parametrization $%
(x,y,z,v,w)=r(a,b,0,0,0)+s(0,0,c,d,0)+t(e,e,e,e,f)$ with $r+s+t=1$. $S$
intersects $S_{4},S_{5}$ and $S_{6}$ only if three systems of linear
equations in $r,s,t$ represented by the following matrices have nontrivial
solutions:

\bigskip 

\noindent $\left\vert 
\begin{array}{ccc}
a & 0 & e \\ 
0 & d & e \\ 
b-1 & c-1 & 2e+f-1%
\end{array}%
\right\vert \left\vert 
\begin{array}{ccc}
b & 0 & e \\ 
0 & c & e \\ 
a-1 & d-1 & 2e+f-1%
\end{array}%
\right\vert \left\vert 
\begin{array}{ccc}
b & 0 & e-f \\ 
0 & d & e-f \\ 
a-1 & c-1 & 2e-f-1%
\end{array}%
\right\vert $

\bigskip 

\noindent As the determinants of these matrices are linear functions on the
variables $e$ and $f$, they vanish simultaneously if and only if the matrix
of this new system has determinant 0:

\bigskip

$\det \left\vert 
\begin{array}{ccc}
-ac+2ad-bd+a+d & ad & -ad \\ 
-ac+2bc-bd+b+c & bc & -bc \\ 
-ad-bc+2bd+b+d & ad+bc-bd-b-d & -bd%
\end{array}%
\right\vert =0$

\bigskip

\noindent This determinant factors as the product of a linear and a
quadratic function of $a$ and $b$ (with coefficients in $c$ and $d$). Since
the boundary of the face $F_{1}$ is formed by the intersections of $S_{1}$
with the planes that meet all $S_{i}$'s and go through a fixed point in the
boundary of $F_{2}$ (this corresponds to fixing $c$ and $d$), the boundary
of $F_{1}$ is contained in the union of a line and a conic. As the boundary
of $F_{1}$ is strictly convex, it must be the conic.
\end{proof}

\bigskip

\begin{theorem}
\label{dim5} All convex bodies in $\mathbb{R}^{5}$ with modular and
irreducible face lattice are projectively equivalent.
\end{theorem}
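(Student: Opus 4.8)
The plan is to show that any convex body $C$ in $\mathbb{R}^5$ with modular, irreducible face lattice that is not strictly convex is projectively equivalent to a fixed model, and then to observe that the model is projectively unique. Since $C$ is not strictly convex and its face lattice is irreducible and modular, $\mathcal{E}(C)$ is a topological projective plane (by the lemma preceding Theorem \ref{main}), and since $\dim C = 5$ the computation $n = 3d+2$ forces $d = 1$. A $1$-dimensional topological projective line is a topological circle, so the projective plane is the real projective plane $\mathbb{RP}^2$, which is desarguesian; by Theorem \ref{main} there is a face-preserving homeomorphism from $C$ to a section $C'$ of the cone $C_3(\mathbb{R})$ of positive-semidefinite real $3\times 3$ matrices. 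Thus it suffices to prove that \emph{any} such $C$ is projectively equivalent to \emph{one} fixed body, e.g. to the specific section $C'$ of $C_3(\mathbb{R})$.

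The key step is the normalization already carried out in the previous lemma. The face lattice of $C$, realizing $\mathbb{RP}^2$, contains the Fano-type incidence configuration of $7$ points and $6$ lines of Figure 1 (four "generic" points and the three "diagonal" points, with the six lines through triples); these correspond to $7$ extreme points $p_0,\dots,p_6$ of $C$ and $6$ rank-$1$ faces $F_1,\dots,F_6$ of $C$ meeting according to that incidence pattern. As shown above, the $7$ points are in general position in $\mathbb{R}^5$ (any $6$ of them span three faces, hence all of $\mathbb{R}^5$), so a projective transformation of $\mathbb{R}^5$ carries them to the standard coordinate frame $p_0 = 0$, $p_i = e_i$ for $1\le i\le 5$, and $p_6 = (1,1,1,1,1)$. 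After this normalization the planes $S_i = \mathrm{span}(F_i)$ are completely determined, and the previous lemma computes, for each $i$, the boundary $\partial F_i$ as the unique strictly convex conic arc determined by the three extreme points it contains and the constraint that $S_i$ meet every span in $\mathcal{S}$. In particular every rank-$1$ face of $C$ is determined: by Lemma \ref{spans} its span is one of the planes meeting all of $\mathcal{S}$, and on that plane the face is the convex region bounded by the corresponding conic.

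To finish, I would argue that the entire body $C$ is determined by its family of rank-$1$ faces. Every extreme point of $C$ lies on some rank-$1$ face (the lattice is atomic and every pair of extreme points lies in a proper face), so $\mathcal{E}(C) = \bigcup_i \mathcal{E}(F_i)$ after applying projective symmetries of the standard frame; since each $\mathcal{E}(F_i)$ is the explicit conic from the previous lemma, $\mathcal{E}(C)$ is determined, and $C$ is its convex hull. Hence the normalized $C$ is literally one fixed convex body, independent of the original $C$, which proves that all such bodies are projectively equivalent. The main obstacle is the last point: one must be sure that the combinatorially-specified incidence configuration really does pin down \emph{all} rank-$1$ faces (not merely the six in Figure 1) in a projectively rigid way. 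This is exactly what Lemma \ref{spans} together with the conic computation of the previous lemma delivers --- the set $\mathcal{S}$ of spans is the set of $2$-planes meeting all of $\mathcal{S}$, a closed self-referential condition cut out by the finitely many $S_1,\dots,S_m$, and once the frame is fixed this whole algebraic family, and the strictly convex conic sitting inside each plane of it, is uniquely determined. A small remaining check is that a face-preserving homeomorphism between two such normalized bodies can be upgraded to the identity via a projective map, which follows because both bodies have the \emph{same} extreme-point set once normalized, so the identity map already works.
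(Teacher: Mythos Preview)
Your outline has the right shape but the key rigidity step is not actually supplied by the previous lemma. After you send the seven extreme points to the standard frame you claim that the previous lemma ``computes'' each $\partial F_i$ as a \emph{unique} conic through its three marked points. The lemma does not do this: it only shows that $\partial F_1$ is \emph{some} conic, obtained as the quadratic factor of the determinant once a point $(c,d)\in\partial F_2$ is fixed. The coefficients of that conic depend on $(c,d)$, and $(c,d)$ is constrained to lie on $\partial F_2$, which is itself unknown. More concretely, the computation in that lemma shows that the $2$--planes meeting all six $S_i$ form (generically) a $3$--dimensional variety in the $(a,b,c,d)$ parameters, whereas the family $\mathcal{S}$ of actual face--spans is only $2$--dimensional; so the six $S_i$'s (hence the seven normalized points) do \emph{not} cut out $\mathcal{S}$, and your self--referential description of $\mathcal{S}$ as ``the $2$--planes meeting all of $\mathcal{S}$, cut out by finitely many $S_1,\dots,S_m$'' gives you nothing once $m>6$, since you have no control over the extra $S_j$'s. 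Normalizing seven points in general position already exhausts all of $\mathrm{PGL}(6,\mathbb{R})$, so if the conics were \emph{not} determined by this data your argument would simply stop.

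The paper avoids this trap by \emph{not} normalizing seven points. Instead it matches the two conics $F_1,F_2$ to two chosen faces $F_1',F_2'$ of $C'$ directly, using that any two plane conics are projectively equivalent; this fixes a linear map $l$ on the $4$--dimensional span of $F_1\cup F_2$. The remaining points $p_1',\dots,p_6'$ are then \emph{defined} from the face structure of $C'$ (as $l$--images and as intersections $S_4'\cap S_5'$, $S_3'\cap S_6'$), and only afterwards is $l$ extended to a projective transformation $\rho$ of $\mathbb{RP}^5$. The crucial gain is that $\rho$ carries $\partial F_1,\partial F_2$ onto $\partial F_1',\partial F_2'$ by construction, so every span of a face of $C$ (which meets $\partial F_1$ and $\partial F_2$ at extreme points) is sent to a plane meeting $\partial F_1'$ and $\partial F_2'$ at extreme points of $C'$; this, together with Lemma~\ref{spans}, is what forces spans to go to spans. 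Your seven--point normalization gives no control over $\rho(\partial F_1)$ and $\rho(\partial F_2)$, which is exactly the missing ingredient.
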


\begin{proof}
Let $C$ and $C^{\prime }$ be two such bodies. Take extreme points $%
p_{0},p_{1},...,p_{6}$ and faces $F_{1},...F_{6}$ of $C$ as in figure 1.
Pick an extreme point $p_{0}^{\prime }$ in $C^{\prime }$ and two faces $%
F_{1}^{\prime }$ and $F_{2}^{\prime }$ of $C^{\prime }$ intersecting at $%
p_{0}^{\prime }$. Let $S_{i}$ be the span of $F_{i}$. As the faces of $C$
and $C^{\prime }$ are conics, there are linear transformations from $S_{1}$
to $S_{1}^{\prime }$ taking $F_{1}$ to $F_{1}^{\prime }$ and from $P_{2}$ to 
$P_{2}^{\prime }$ taking $F_{2}$ to $F_{2}^{\prime }$. Together, they define
a linear transformation $l$ from $span(F_{1}\cup F_{2})$ to $%
span(F_{1}^{\prime }\cup F_{2}^{\prime })$. Let $p_{i}^{\prime }=l(p_{i})$
for $i=1,...,4$. The faces $F_{4},F_{5},F_{6}$ are generated by unique pairs
of $p_{i}$'s with $i\leq 4$. Let $F_{4}^{\prime },F_{5}^{\prime
},F_{6}^{\prime }$ be the faces generated by the corresponding pairs of $%
p_{i}^{\prime }$s. Finally, let $p_{5}^{\prime }=S_{4}^{\prime }\cap
S_{5}^{\prime }$, let $F_{3}^{\prime }$ be the face generated by $%
p_{0}^{\prime }$ and $p_{5}^{\prime }$ and let $p_{6}^{\prime
}=S_{3}^{\prime }\cap S_{6}^{\prime }$. The linear transformation $l$ can be
extended to a projective transformation $\rho $ in $\mathbb{RP}^{5}$ that
takes $p_{5}$ to $p_{5}^{\prime }$ and $p_{6}$ to $p_{6}^{\prime }$. As $%
\rho $ sends each $p_{i}$ to $p_{i}^{\prime }$, it sends each $S_{i}$ to $%
S_{i}^{\prime }$, so it sends each plane in $\mathbb{R}^{5}$ intersecting
every $S_{i}$ to a plane intersecting every $S_{i}^{\prime }$. Since by
construction $\rho $ takes those planes that meet $\partial F_{1}$ and $%
\partial F_{2}$ to planes that meet $\partial F_{1}^{\prime }$ and $\partial
F_{2}^{\prime }$, lemma \ref{spans} implies that $\rho $ maps spans of faces
of $C$ to spans of faces of $C^{\prime }$ and therefore it maps faces to
faces.
\end{proof}

\bigskip

Question 1: \textit{Are all the convex bodies whose face lattices determine
classical projective spaces projectively equivalent to sections of cones of
hermitian matrices?}

\bigskip

Question 2: \textit{Can two convex bodies of the same dimension define non
isomorphic projective planes (so they are not related by a face-preserving
homeomorphism)?}\bigskip

In dimensions\textit{\ }$8$ and $14$ this is equivalent to ask if the
projective planes are always desarguesian. In dimension $26$ there might be
enough space for non-equivalent non-desarguesian examples.

\end{document}